\newtheorem{lemma}{Lemma}[section]
\newtheorem{theorem}[lemma]{Theorem}
\newtheorem{proposition}[lemma]{Proposition}
\newtheorem{prop}[lemma]{Proposition}
\newtheorem{cor}[lemma]{Corollary}
\newtheorem{claim*}{Claim}
\newtheorem{thm}[lemma]{Theorem}
\theoremstyle{remark}
\newtheorem{remark}[lemma]{Remark}
\newtheorem{remarks}[lemma]{Remarks}
\newcommand{\A}{{\mathbb A}}
\newcommand{\Aff}{{\mathbb A}}
\newcommand{\G}{{\mathbb G}}
\newcommand{\PP}{{\mathbb P}}
\newcommand{\pp}{{\mathbb P}}
\newcommand{\F}{{\mathbb F}}
\newcommand{\Q}{{\mathbb Q}}
\newcommand{\Z}{{\mathbb Z}}
\newcommand{\kbar}{{\overline{k}}}
\newcommand{\Fbar}{{\overline{\F}}}
\newcommand{\kk}{{\mathbf k}}
\newcommand{\calE}{{\mathcal E}}
\newcommand{\calG}{{\mathcal G}}
\newcommand{\calO}{{\mathcal O}}
\newcommand{\calU}{{\mathcal U}}
\newcommand{\OO}{{\mathcal O}}
\newcommand{\frakp}{{\mathfrak p}}
\DeclareMathOperator{\HH}{H}
\DeclareMathOperator{\Char}{char}
\DeclareMathOperator{\inv}{inv}
\DeclareMathOperator{\im}{im}
\DeclareMathOperator{\Hom}{Hom}
\DeclareMathOperator{\Gal}{Gal}
\DeclareMathOperator{\Cor}{Cor}
\DeclareMathOperator{\Br}{Br}
\DeclareMathOperator{\divv}{div}
\DeclareMathOperator{\Pic}{Pic}
\DeclareMathOperator{\Spec}{Spec}
\DeclareMathOperator{\Proj}{Proj}
\DeclareMathOperator{\sep}{sep}
\DeclareMathOperator{\et}{et}
\DeclareMathOperator{\Span}{Span}
\DeclareMathOperator{\spl}{split}
\newcommand{\del}{\partial}
\newcommand{\eps}{\varepsilon}
\newcommand{\isom}{\simeq}
\newcommand{\injects}{\hookrightarrow}
\newcommand{\pairing}{\langle {\cdot}, {\cdot} \rangle}
\newcommand{\surjects}{\twoheadrightarrow}
\newcommand{\To}{\longrightarrow}
\newcommand{\intersect}{\cap} 
\newcommand{\Intersection}{\bigcap} 
\newcommand{\Union}{\bigcup} 
\renewcommand{\setminus}{\smallsetminus} 
\numberwithin{equation}{section}
\numberwithin{table}{section}
\newcommand{\defi}[1]{\textsf{#1}} 
\title{Brauer--Manin obstructions requiring arbitrarily many Brauer classes}
\author{Jennifer Berg}
\address{Bucknell University, Department of Mathematics, Lewisburg, PA 17837, USA}
\email{jsb047@bucknell.edu}
\urladdr{\url{https://sites.google.com/view/jenberg}}
\author{Carlo Pagano}
\address{Concordia University, Department of Mathematics and Statistics, Montreal, Quebec H3G 1M8, Canada}
\email{carlein90@gmail.com}
\urladdr{\url{https://sites.google.com/view/carlopagano}}
\author{Bjorn Poonen}
\address{Department of Mathematics, Massachusetts Institute of Technology, Cambridge, MA 02139-4307, USA}
\email{poonen@math.mit.edu}
\urladdr{\url{https://math.mit.edu/~poonen}}
\author{Michael Stoll}
\address{Mathematisches Institut, Universit\"at Bayreuth,
         95440 Bayreuth, Germany}
\email{Michael.Stoll@uni-bayreuth.de}
\urladdr{\url{https://www.mathe2.uni-bayreuth.de/stoll/}}
\author{Nicholas Triantafillou}
\address{Center for Communications Research, 805 Bunn Drive, Princeton, NJ 08540, USA}
\email{n.triantafillou@idaccr.org}
\urladdr{\url{https://ngtriant.github.io/}}
\author{Bianca Viray}
\address{University of Washington, Department of Mathematics, Box 354350, Seattle, WA 98195, USA}
\email{bviray@uw.edu}
\urladdr{\url{https://math.washington.edu/~bviray}}
\author{Isabel Vogt}
\address{Brown University, Department of Mathematics, Box 1917, 151 Thayer Street, Providence, RI 02912, USA}
\email{ivogt.math@gmail.com}
\urladdr{\url{https://www.math.brown.edu/ivogt/}}
\date{August 15, 2023}
\subjclass[2020]{Primary 14G05; Secondary 11G25, 14F22, 14J20, 14J26}
\keywords{Brauer group, Brauer--Manin obstruction, conic bundle}
\begin{document}

	 \begin{abstract}
  On a projective variety defined over a global field, any Brauer--Manin obstruction to the existence of rational points is captured by a finite subgroup of the Brauer group.  
  We show that this subgroup can require arbitrarily many generators.
	 \end{abstract}

	\maketitle

\section{Introduction}
    A fundamental problem in arithmetic geometry is to determine, given a regular projective geometrically integral variety \(X\) over a global field \(k\), whether \(X(k)\neq\emptyset\).
    A first approach is to check local solvability, i.e., whether \(X(k_v)\neq\emptyset\) for all places \(v\); this can be done effectively.  However, there are many varieties that fail the so-called local-to-global principle; that is, \(X(k) = \emptyset\) even though \(X(k_v)\neq\emptyset\) for all places \(v\).  All but a handful of such examples in the literature are explained by the \defi{Brauer--Manin obstruction}, first introduced by Manin in 1970~\cite{Manin-ICM}.  

    Manin observed that the functoriality of the Brauer group gives a pairing 
    \[
        \pairing \colon X(\A_k)\times \Br X\to \Q/\Z  
    \]
    and an exact sequence from global class field theory \eqref{equation:global class field theory} implies that \(X(k)\) is contained in the left kernel of $\pairing$.  More precisely, each element \(\alpha\in \Br X\) defines a subset \(X(\A_k)^\alpha\) consisting of all adelic points orthogonal to \(\alpha\) under $\pairing$.
    For $B \subset \Br X$, define \(X(\A_k)^{B} \colonequals \Intersection_{\alpha \in B} X(\A_k)^\alpha\).
    The \defi{Brauer--Manin set} is \(X(\A_k)^{\Br} \colonequals X(\A_k)^{\Br X}\).

    For a regular projective variety \(X\), the continuity of $\pairing$ implies that if \(X(\A_k)^{\Br} = \emptyset\) then there exists a \emph{finite} subgroup \(B\subset \Br X\) that captures the obstruction, in the sense that \(X(\A_k)^{B}=\emptyset\).  In fact, in the literature, the vast majority of examples with \(X(\A_k)^{\Br} = \emptyset\) have \(X(\A_k)^{\langle \alpha\rangle}\) for one \(\alpha \in \Br X\), but this is likely because it is simpler to compute the obstruction from one element. To the best of our knowledge, the only examples requiring more than one element are~\cite{KT-dp2}*{Example 7} and~\cite{Corn}*{Example 9.4}.
    
    Our main result shows that the examples in the literature are not representative of what is theoretically possible, that the finite subgroup \(B\subset \Br X\) that captures a Brauer--Manin obstruction can require arbitrarily many generators.  
    
    \begin{theorem}\label{thm:Main}
        Let \(N\) be a positive integer and let \(k\) be a global field of characteristic not \(2\).  There exists a smooth projective geometrically integral variety $X$ over $k$ such that \hbox{\(X(\A_k)^{\Br} = \emptyset\)}, while for all subgroups \(B\subset \Br X\) generated by at most $N$ elements, \(X(\A_{k})^{B}\) is nonempty.
    \end{theorem}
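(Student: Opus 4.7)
My strategy would be to translate the theorem into a linear-algebra question about the image of the Brauer--Manin evaluation map, and then construct an explicit variety (a conic bundle is a natural candidate) realizing the required image.

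First, I would aim to construct $X$ so that $(\Br X)/\Br_0 X$ is $2$-torsion of $\F_2$-dimension exactly $n = N+1$ (using that $\Char k \neq 2$ gives control of $2$-power Brauer classes). After choosing a basis, the Brauer--Manin pairing induces a map
\[
\Phi\colon X(\A_k) \longrightarrow \Hom\bigl((\Br X)/\Br_0 X,\, \tfrac{1}{2}\Z/\Z\bigr) \cong \F_2^{N+1}.
\]
The condition $X(\A_k)^{\Br} = \emptyset$ becomes $0 \notin \im(\Phi)$. Moreover, for $B \subseteq \Br X$ generated by at most $N$ elements, $X(\A_k)^{B} \neq \emptyset$ if and only if $\im(\Phi) \cap B^{\perp} \neq \emptyset$, where $B^{\perp} \subseteq \F_2^{N+1}$ has $\F_2$-dimension at least $1$. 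It therefore suffices to arrange that $\im(\Phi) = \F_2^{N+1} \setminus \{0\}$, since every nonzero subspace of $\F_2^{N+1}$ contains a nonzero vector, hence meets the image.

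Second, I would search for a construction realizing this image, such as a smooth projective conic bundle $X \to S$ over a rational base $S$ (for instance $\PP^1$ or a product of projective lines), with the discriminant chosen so that the associated quaternion algebras produce $N+1$ independent classes in $(\Br X)/\Br_0 X$. The guiding idea is to couple $N+1$ Iskovskikh-style obstructions, each responsible for one coordinate of the image, so that local choices at auxiliary places can toggle each coordinate independently, while a global reciprocity-style identity prevents all $N+1$ evaluations from vanishing simultaneously. Equivalently, one wants a discriminant of the form $f_0(t) f_1(t) \cdots f_N(t)$ (with extra auxiliary factors controlling the reciprocity constraint) so that the residues of the Brauer classes along the components are linearly independent and the local invariants can be prescribed freely in $\F_2^{N+1}$ apart from the excluded vector $0$.

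The main obstacle will be the precise control required to realize $\im(\Phi)$ as exactly $\F_2^{N+1} \setminus \{0\}$. This demands: (i) a computation of $(\Br X)/\Br_0 X$, most likely via the Hochschild--Serre spectral sequence combined with a residue--purity analysis for the conic bundle structure; (ii) a demonstration that each nonzero profile in $\F_2^{N+1}$ is realized by some adelic point, which requires sufficient flexibility in the local evaluation maps (controlled by an auxiliary place per Brauer class where the class has a varying local invariant); and (iii) a proof that no adelic point yields the zero profile, which should follow from a global reciprocity relation forcing a specific $\F_2$-linear combination of the $N+1$ local-invariant sums to be nonzero for every adelic point. Steps (ii) and (iii) pull in opposite directions and must be balanced via the careful choice of the discriminant and the auxiliary ramification places; this is the heart of the construction.
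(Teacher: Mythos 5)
Your reduction to arranging $\im(\Phi) = \F_2^{N+1}\setminus\{0\}$ for a variety with $\overline{\Br}\,X \cong (\Z/2\Z)^{N+1}$, together with the proposed realization via a conic bundle over $\PP^1$ with a factored discriminant whose residue classes are independent subject to one relation and whose local invariants are toggled at auxiliary places, is exactly the route the paper takes (its Sections on the group-theoretic criterion, the Brauer group of conic bundles, and the base-change construction). You correctly isolate the heart of the matter---realizing every nonzero profile while excluding the zero profile---but leave that step (the paper's Theorem~\ref{thm:MainBaseChangeThm}, executed via Weil-bound surjectivity at chosen places followed by pullback along a carefully interpolated polynomial $g\colon\PP^1\to\PP^1$) as an unexecuted sketch.
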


    \begin{remarks}\hfill
        \begin{enumerate}[label*=(\alph*), font=\normalfont]
        \item 
        If $S$ is a subset of $\Br X$ and $\langle S \rangle$ is the group generated by $S$, then \(X(\A_k)^{S} = X(\A_k)^{\langle S \rangle}\).  This is why we use only \emph{subgroups} of the Brauer group in our statements.
        \item The variety \(X\) we construct is a conic bundle over \(\mathbb{P}^1\). In particular, \(X\) is a geometrically rational surface.  However, for some other types of geometrically rational surfaces, a Brauer--Manin obstruction, when present, is always captured by a single element of the Brauer group.  For example, this holds for quartic and cubic del Pezzo surfaces~\cite{CTP}*{Lemma 3.4 and the following Remark (ii)}.
        \item Theorem~\ref{thm:Main} considers how many generators are required for the subgroup \(B\).  One can also ask about constraints on the orders of the elements in \(B\).  This and related questions were considered in work of Skorobogatov and Zarhin~\cite{SZ-Kummer2torsion}, Creutz and Viray~\cite{CV-Capturing}, Creutz, Viray, and Voloch~\cite{CVV-Capturing}, and Nakahara~\cite{Nakahara-Capturing}. See also~\cite{Viray-PCMI} for a survey of these questions. 
        \item In this paper, we are interested in varieties with empty Brauer--Manin set.  It is worth noting that there are varieties with nonempty Brauer--Manin set that still fail to have rational points.  The first example of this was constructed by Skorobogatov in 1999~\cite{Skorobogatov-etBr}; see~\cites{Poonen-Insufficiency, HS, BBMPV, CTPS, KPS} for further examples.
        \end{enumerate}
    \end{remarks}

\subsection{Outline}
In~\cite{CTP}*{Lemma 3.4}, Colliot-Th\'el\`ene and Poonen reformulate the Brauer--Manin obstruction in terms of dual groups.  We use this perspective in Section~\ref{sec:group} to provide a combinatorial local criterion for the behavior in Theorem~\ref{thm:Main}.  The remainder of the paper focuses on constructing a variety \(X\) which exhibits this local behavior.  In Section~\ref{sec:poly}, we collect various lemmas about polynomials taking prescribed values which will be used in the proof.  In Section~\ref{S:Faddeev}, we present a version of Faddeev's theorem on the Brauer group of a rational function field that is valid over any constant field.  In Section~\ref{S:generalities}, we review and generalize some well-known facts on conics and conic bundles; in particular, our Theorem~\ref{theorem:BrConicBundle} generalizes a result on Brauer groups of conic bundles to a setting more general than we need, for the sake of potential future applications. The heart of the paper is Section~\ref{S:base change}, which constructs suitable conic bundles split by a constant extension by using a pullback construction generalizing~\cite{CTP}*{Lemma 3.3}. In Section~\ref{sec:explicit}, we give for each $N \ge 1$, an explicit construction of a conic bundle over \(\Q\) as in Theorem~\ref{thm:Main} with the additional property that the local evaluation maps \(X(\Q_v)\times \Br X\to \Q/\Z\) are constant for all but one place \(v\).  Finally, we include an appendix which explores the possible ways that the combinatorial criterion from Section~\ref{sec:group} can be achieved.

\section*{Acknowledgements}
    This project started at the Park City Mathematics Institute (PCMI) 2022 program ``Number theory informed by computation''.  We thank the PCMI director, Rafe Mazzeo, the PCMI staff, particularly Dena Vigil, and the PCMI scientific committee for their work that made such a program possible.  We also thank the program organizers, Jennifer Balakrishnan, Bjorn Poonen, and Akshay Venkatesh, for giving us the opportunity to participate.

    We thank Eric Larson for helpful conversations around the topic of Proposition~\ref{prop:SumsetContainsSubspacenew}
    and Fedor Petrov for pointing us to Rado's theorem when we asked on \href{https://mathoverflow.net/questions/443133/linearly-independent-vectors-from-a-family-of-subspaces}{MathOverflow} for a reference.

    This material is based partially upon work supported by National Science Foundation grant DMS-1928930 while several of the authors (J.B., M.S., B.V., and I.V.) were in residence at the Simons Laufer Mathematical Sciences Institute in Berkeley, California, during the Spring 2023 semester. 
C.P. thanks the Max Planck Institute in Bonn for hospitality and excellent work conditions. 
B.P. was supported in part by NSF grant DMS-2101040 and Simons Foundation grants \#402472 and \#550033.
M.S. was supported in part by DFG grants STO 299/13-1 (AOBJ: 635353) and STO 299/17-1 (AOBJ: 662415).
    B.V. was supported in part by an AMS Birman Fellowship and NSF grant DMS-2101434. I.V. was supported in part by NSF grant DMS-2200655. 

\section{Notation} 

For any field $k$, let $\Br k$ be its Brauer group.
If $\Char k \ne 2$ and $a,b \in k^\times$, then let $(a,b)$ be the class of the quaternion algebra $k\langle i,j \rangle/(i^2=a,j^2=b,ij=-ji)$ in $\Br k$.

Suppose that \(k\) is a global field.
For each place \(v\) of \(k\), let $k_v$ be the completion, 
and let $\inv_v \colon \Br k_v \to \Q/\Z$ be the injection defined using the sign convention of \cite{CTS-BrauerBook}*{Def.~13.1.7}; see also \cite{CTS-BrauerBook}*{Rmk.~13.1.12}.
There is an exact sequence
\begin{equation}
\label{equation:global class field theory}
0 \To \Br k \To \bigoplus_v \Br k_v \xrightarrow{\sum_v\inv_v} \Q/\Z \To 0.
\end{equation}

For any scheme $X$, let $X^{(1)}$ be the set of points of codimension~$1$.  If $t \in X$, let $\kk(t)$ be the residue field.  If $X$ is integral, let $\kk(X)$ be the function field of $X$.

Let \(X\) be a regular projective geometrically integral variety over $k$.  Its \defi{Brauer group} is \(\Br X \colonequals \HH^2_{\et}(X, \G_m)\), and its subgroup of \defi{constant classes} is \(\Br_0 X \colonequals \im\left(\Br k \to \Br X\right)\). We write \(\overline{\Br}\,X\) for the quotient $\Br X/\Br_0 X$.
Summing the local pairings
\begin{align*}
    \pairing_v \colon X(k_v) \times \Br X &\To \Q/\Z \\
    ( P_v , \alpha) &\longmapsto \inv_v P_v^* \alpha
\end{align*}
defines the \defi{Brauer--Manin pairing} 
\( \pairing \colon X(\A_k) \times \Br X \to \Q/\Z\).
Fix a finite subgroup $B \subset \Br X$ and let $\hat{B} = \Hom(B, \Q/\Z)$; 
we obtain set maps
\begin{align*}
    \phi_v \colon X(k_v) &\To \hat{B} \\
    P_v &\longmapsto (\alpha \mapsto \inv_v P_v^* \alpha )
\end{align*}
summing to \(\phi \colon X(\A_k)\to \hat{B}\),
and their images $S_v \colonequals \phi_v(X(k_v))$ sum in $\hat{B}$ to the image $S \colonequals \phi(X(\A_k))$ (we have \(S_v = \{0\}\) for all but finitely many places \(v\), so the sum is well-defined).
These definitions extend naturally to an injective homomorphism $B \hookrightarrow \Br X$ (rather than a subgroup \(B\subset \Br X\)).

\section{A group-theoretic interpretation of the Brauer--Manin obstruction} \label{sec:group}
    
    \begin{lemma} Let \(X/k\) be a regular projective geometrically integral variety and let \(B \subset \Br X\) be a finite subgroup.
        \begin{enumerate}[label*=\normalfont(\alph*)]
            \item \label{I:0 not in S}
            We have \(X(\A_k)^B = \emptyset\) if and only if \(0\notin S\).
            \item There is a proper subgroup \(B'\subsetneq B\) with \(X(\A_k)^{B'} = \emptyset\) if and only if there exists a nontrivial subgroup \({H}\subseteq \hat{B}\) that is disjoint from \(S\).
        \end{enumerate}
    \end{lemma}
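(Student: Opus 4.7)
The plan is to prove (a) by unpacking the definitions and to deduce (b) from (a) via Pontryagin duality for finite abelian groups.

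For (a), observe that by definition $(P_v) \in X(\A_k)^B$ iff $\sum_v \inv_v P_v^* \alpha = 0$ for every $\alpha \in B$. Since $\phi((P_v))$ is by construction the homomorphism $\alpha \mapsto \sum_v \inv_v P_v^* \alpha$, this is exactly the condition $\phi((P_v)) = 0 \in \hat{B}$. Hence $X(\A_k)^B \neq \emptyset$ iff $0 \in S$.

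For (b), the key observation is that $\Hom(-,\Q/\Z)$ sets up an inclusion-reversing bijection between subgroups of the finite group $B$ and subgroups of $\hat{B}$, sending a subgroup $B' \subseteq B$ to its annihilator $(B')^\perp := \{\chi \in \hat{B} : \chi|_{B'} = 0\}$. Under this duality, $B' \subsetneq B$ corresponds to $(B')^\perp \neq 0$. Moreover, for any $B' \subseteq B$, the local map $\phi_v'$ associated to $B'$ factors as $\phi_v' = r \circ \phi_v$, where $r \colon \hat{B} \twoheadrightarrow \hat{B'}$ is the restriction, whose kernel is precisely $(B')^\perp$. It follows that if $S'$ denotes the analogue of $S$ for $B'$, then $S' = r(S)$, so $0 \in S'$ iff $S \cap (B')^\perp \neq \emptyset$.

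Applying part (a) to $B'$ therefore yields $X(\A_k)^{B'} = \emptyset$ iff $S \cap (B')^\perp = \emptyset$. Letting $B'$ range over the proper subgroups of $B$ (equivalently, letting $H := (B')^\perp$ range over the nontrivial subgroups of $\hat{B}$) gives the equivalence claimed in (b). Once part (a) is in hand the argument is essentially formal; the only step demanding any care is identifying the kernel of the restriction $\hat{B} \to \hat{B'}$ with $(B')^\perp$, which is immediate from the duality between finite abelian groups and their character groups. There is no serious obstacle beyond this bookkeeping.
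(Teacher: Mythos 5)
Your proof is correct and follows essentially the same route as the paper: part (a) by unwinding the definition of the Brauer--Manin pairing, and part (b) by the inclusion-reversing bijection $B' \mapsto \ker(\hat{B}\to\hat{B'})$ together with the observation that $S'$ is the image of $S$ under the restriction map, so that $0 \in S'$ is equivalent to $H \cap S \neq \emptyset$. No issues.
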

     
    \begin{proof}
    \hfill
    \begin{enumerate}[label*=\normalfont(\alph*)]
    \item
    For $y \in X(\A_k)$, the following three conditions are equivalent:
    $y \in X(\A_k)^{B}$; \hbox{$\langle y,B \rangle = 0$}; $\phi(y)=0$.
    Thus $X(\A_k)^{B} \ne \emptyset$ if and only if $0 \in \phi(X(\A_k)) \equalscolon S$.
        \item
        Suppose that $B'$ and $H$ are groups corresponding under the bijection
       \begin{align*}
        \{\textup{subgroups of $B$}\} &\longleftrightarrow \{\textup{subgroups of $\hat{B}$}\}\\
        B' &\longmapsto H \colonequals \ker(\hat{B}\to \hat{B'}).
        \end{align*}
        Then $B' \ne B$ if and only if $H \ne \{0\}$.
        Let $S'$ be the image of the composition \(X(\A_k) \to \hat{B} \to \hat{B'}\).
        By~\ref{I:0 not in S},  \(X(\A_k)^{B'} = \emptyset\) if and only if $0 \notin S'$,
        which holds if and only if $H \cap S = \emptyset$, since $H = \ker(\hat{B}\to\hat{B'})$ and $S'$ is the image of $S$ under $\hat{B}\to\hat{B'}$.\qedhere
        \end{enumerate}
    \end{proof}

    \begin{cor}\label{cor:2torsion}
    Let \(B\subset \Br X\) be a finite elementary abelian \(2\)-group.
    Then the two conditions \(X(\A_k)^B = \emptyset\) and \(X(\A_k)^{B'} \neq \emptyset\) for all proper subgroups \(B'\subsetneq B\) hold if and only if \(S = \hat{B}\setminus\{0\}\).
    \end{cor}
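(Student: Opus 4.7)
The plan is to deduce the corollary directly from the lemma above by exploiting the special structure of elementary abelian $2$-groups. First, by part~(a) of the lemma, the condition $X(\A_k)^B = \emptyset$ is equivalent to $0 \notin S$. Second, negating part~(b) shows that the condition that $X(\A_k)^{B'} \neq \emptyset$ for every proper subgroup $B' \subsetneq B$ is equivalent to asserting that every nontrivial subgroup $H \subseteq \hat{B}$ meets $S$. So the two conditions together are equivalent to: $0 \notin S$, and every nontrivial subgroup of $\hat{B}$ intersects $S$. The task reduces to showing this combination is equivalent to $S = \hat{B} \setminus \{0\}$.

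The implication from $S = \hat{B}\setminus\{0\}$ to both conditions is immediate: $0 \notin S$ is clear, and any nontrivial subgroup $H \subseteq \hat{B}$ contains a nonzero element, which lies in $S$. For the converse, I would use the key feature of elementary abelian $2$-groups: since $B$ is an elementary abelian $2$-group, so is its Pontryagin dual $\hat{B}$. Hence, for every nonzero $x \in \hat{B}$, the subgroup $\langle x\rangle = \{0,x\}$ has order~$2$ and is nontrivial, so by hypothesis it meets $S$; since $0 \notin S$, we conclude $x \in S$. This shows $\hat{B}\setminus\{0\} \subseteq S$, and together with $0 \notin S$ we obtain equality.

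I do not anticipate any substantive obstacles: the corollary is essentially a direct translation of the lemma under the assumption that $\hat{B}$ has exponent~$2$. The crucial point, and the only place where the hypothesis enters, is that the two-element subgroups $\{0,x\}$ exhaust the minimal nontrivial subgroups of $\hat{B}$, so forcing $S$ to meet each one is the same as forcing $S$ to contain every nonzero element. Without the elementary abelian $2$-group hypothesis, requiring $S$ to meet every nontrivial (equivalently, every cyclic) subgroup of $\hat{B}$ would be strictly weaker than $S = \hat{B}\setminus\{0\}$, which is exactly why the hypothesis is needed.
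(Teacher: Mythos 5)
Your proof is correct and is precisely the argument the paper leaves implicit (the corollary is stated without proof as an immediate consequence of the lemma): parts (a) and (b) reduce the two conditions to ``$0\notin S$ and every nontrivial subgroup of $\hat{B}$ meets $S$,'' and the exponent-$2$ hypothesis makes the subgroups $\{0,x\}$ the minimal nontrivial ones, forcing $S=\hat{B}\setminus\{0\}$. Your closing remark about why the hypothesis is needed is also accurate.
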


To prove Theorem~\ref{thm:Main}, we construct a variety~$X$ whose Brauer group contains a subgroup \(B\) isomorphic to \((\Z/2\Z)^n\) such that the induced map \(B \to \overline{\Br}\, X\) is an isomorphism and the image $S$ of \(X(\A_k)\) in \(\hat B\) is $\hat{B}\setminus\{0\}$.
One potential way to obtain \(S = \hat{B} \setminus \{0\}\) is to arrange that \(S_{v_0} = \hat{B} \setminus \{0\}\) for a single place \(v_0\) and \(S_v = \{0\}\) for all \(v\neq v_0\); this is what we do in Section~\ref
{S:base change} after some preliminaries in Sections \ref{sec:poly}, \ref{S:Faddeev}, and~\ref{S:generalities}.
There are other ways to arrange $S = \hat{B}\setminus\{0\}$, but the number of places \(v\) where \(\phi_v\) is nonconstant can be bounded in terms of \(\dim_{\F_2} B\):
\begin{theorem}[Specialization of Theorem~\ref{thm:sharpboundnew}]\label{thm:sharpboundBr}
    Let \(B \subset\Br X\) be a finite elementary abelian \(2\)-group.  If \(X(\A_k)^B = \emptyset\) and \(X(\A_k)^{B'}\neq\emptyset\) for all proper subgroups \(B'\subsetneq B\), then
    \[
        \#\{\textup{places $v$ of $k$} : \#S_v \ge 2\} \leq \dim_{\F_2} B - 1.
    \]
\end{theorem}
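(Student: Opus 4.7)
The plan is to reduce Theorem~\ref{thm:sharpboundBr} to a combinatorial statement about sumsets in $\F_2$-vector spaces, and then prove the combinatorial statement by induction on $n := \dim_{\F_2} B$ using Rado's theorem on linearly independent transversals. First, I will apply Corollary~\ref{cor:2torsion} to convert the hypotheses into the identity $S = \hat{B} \setminus \{0\}$ inside $\hat{B} \cong \F_2^n$, where $S = \sum_v S_v$. The places $v$ with $|S_v| = 1$ contribute a single constant; letting their total be $g \in \hat{B}$ and translating one of the remaining sets by $g$, the problem reduces to the combinatorial assertion: \emph{if $S_1, \ldots, S_m \subseteq \F_2^n$ satisfy $|S_i| \geq 2$ and $\sum_{i=1}^m S_i = \F_2^n \setminus \{0\}$, then $m \leq n - 1$.}

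The key structural observation is that for any choice of distinct elements $a_i, b_i \in S_i$, setting $v_i := a_i + b_i \neq 0$, the Minkowski sumset $\{a_1, b_1\} + \cdots + \{a_m, b_m\}$ equals the affine subspace $(\sum_i a_i) + \Span(v_1, \ldots, v_m)$. Since this is contained in $\sum_i S_i = \F_2^n \setminus \{0\}$, every such affine subspace avoids $0$; in particular $\Span(v_1, \ldots, v_m) \neq \F_2^n$ for every choice of $v_i \in D_i := (S_i - S_i) \setminus \{0\}$. If we can find $v_i \in D_i$ whose span is all of $\F_2^n$, the affine subspace above becomes all of $\F_2^n$ and contains $0$, yielding a contradiction. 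The existence of such a ``spanning transversal'' is controlled by Rado's theorem in its matroid-union form for partial linearly independent transversals, which guarantees existence under the Hall-type condition $\dim \Span\bigl(\bigcup_{i \in I} D_i\bigr) \geq |I| - (m - n)$ for every $I \subseteq [m]$.

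If this Hall condition holds and $m \geq n$, we obtain the desired contradiction and conclude $m \leq n - 1$. If the Hall condition fails for some $I \subseteq [m]$, then $W := \Span\bigl(\bigcup_{i \in I} D_i\bigr)$ is a proper subspace of $\F_2^n$, and each $S_i$ with $i \in I$ lies in a single coset $a_i + W$. We then proceed by induction on $n$: passing to the quotient $\F_2^n / W$ controls the $m - |I|$ remaining indices, while analyzing the sumset $\sum_{i \in I}(S_i - a_i) \subseteq W$ controls $|I|$ by the inductive hypothesis on the smaller-dimensional space $W$. Combining these two bounds should yield $m \leq n - 1$.

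The hardest part is closing this inductive step cleanly. Upon quotienting by $W$, the sumset of the images $\overline{S_i}$ for $i \notin I$ equals all of $\F_2^n/W$ rather than the expected $\F_2^n/W \setminus \{\bar 0\}$, so the naive inductive hypothesis does not apply directly. Closing the argument therefore requires a strengthened sumset lemma that simultaneously handles the two conditions ``sumset equals the complement of a single point'' and ``sumset equals the full ambient group'' -- this is the combinatorial content of the sumset proposition alluded to in the acknowledgements and developed in the paper's appendix.
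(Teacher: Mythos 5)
Your reduction to the combinatorial statement is exactly the paper's: Corollary~\ref{cor:2torsion} gives $S=\hat B\setminus\{0\}$, the singleton $S_v$'s are absorbed into a translation, and one is left to show that if $\#S_i\ge 2$ and $\sum S_i=\F_2^n\setminus\{v\}$ then the number of sets is at most $n-1$ (this is Theorem~\ref{thm:sharpboundnew}\ref{I:sumset omits one vector}). Your key observation is also correct and is the right starting point: over $\F_2$ one has $\sum_i\{a_i,b_i\}=(\sum_i a_i)+\Span(a_1+b_1,\dots,a_m+b_m)$, so a spanning transversal of the difference sets forces the sumset to contain $0$, and Rado's theorem is the right tool to produce one.

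The gap is in the degenerate branch, and you have correctly located it but not closed it. When the Hall-type condition fails you get $I$ with $W=\Span(\bigcup_{i\in I}D_i)$ proper and each $S_i$ ($i\in I$) confined to a coset of $W$; but neither of the two objects you then want to induct on satisfies the inductive hypothesis. The quotient sumset $\sum_{i\notin I}\overline{S_i}$ is all of $\F_2^n/W$ (since $W\setminus\{0\}\neq\emptyset$ maps to $\bar 0$), not the complement of a point; and the inner sumset $\sum_{i\in I}(S_i-a_i)\subseteq W$ is not known to be $W$ minus a point either --- you know nothing about it beyond its being a sumset of $\#I$ sets of size $\ge 2$ in a space of dimension $\le\#I-1-(m-n)$. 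So ``the inductive hypothesis on $W$'' does not apply to either piece, and the ``strengthened sumset lemma'' you invoke is neither stated nor proved; as written the proof does not go through. Note also that the paper's appendix result is not the lemma you describe: Proposition~\ref{prop:SumsetContainsSubspacenew} says that if \emph{every} transversal of the (translated, $0$-containing) sets $S_i$ is linearly dependent, then some sub-sumset $\sum_{j\in J}S_j$ is already a nontrivial \emph{subspace}; the paper then finishes in one line, with no induction on $n$, because $\sum_i S_i$ would be a union of cosets of that subspace and hence cannot omit exactly one point. Your argument can be repaired along the same lines: in the degenerate case, rerun the spanning-transversal argument recursively inside $W$ on the family $(S_i-a_i)_{i\in I}$ until you either span (making $\sum_{i\in I}S_i$ a full coset of a nontrivial subspace) or descend to a smaller proper subspace; this terminates and recovers exactly the paper's proposition. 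But that step must actually be carried out --- it is the substantive content of the proof, not a routine verification.
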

\noindent In the appendix, we also show that, set-theoretically, this upper bound is sharp.

\section{Polynomials with prescribed values}
\label{sec:poly}
\subsection{Square and nonsquare values of polynomials}

In this section we collect various lemmas about polynomials taking prescribed values which will be used
in the proof of Theorem~\ref{thm:Main}.
\begin{proposition}
\label{P:Weil bound}
Let \(\F\) be a finite field of odd characteristic,
\(n\) be a nonnegative integer, 
$f_1,\ldots,f_n \in \F[x]$, and set $d = \sum \deg f_i$.
Assume that $f_1,\ldots,f_n$ are independent in $\Fbar(x)^\times/\Fbar(x)^{\times 2}$.
Then
\[
    \#\{c\in \F : f_i(c)\varepsilon_i\in \F^{\times2} \textup{ for all }i\} = (\#\F)/2^n + O(d \sqrt{\#\F}),
\]
for any choice of \(\varepsilon_1,\ldots, \varepsilon_n\in \F^{\times}\), where the implicit constant in~$O(d\sqrt{\#\F})$ is absolute.
In particular, if $\#\F$ is sufficiently large relative to $n$ and $d$, then the polynomials \(f_i\) simultaneously realize all collections of square classes \((\F^{\times}/\F^{\times2})^n\).
\end{proposition}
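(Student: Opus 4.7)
The plan is to express the count using the quadratic character $\chi \colon \F^{\times} \to \{\pm 1\}$, extended to $\F$ by $\chi(0) = 0$, and then apply Weil's bound on character sums of polynomial arguments. For each $c \in \F$ with $f_i(c) \neq 0$ for all $i$, the indicator that $f_i(c)\varepsilon_i \in \F^{\times 2}$ for all $i$ is
\[
\prod_{i=1}^n \frac{1 + \chi(\varepsilon_i)\chi(f_i(c))}{2} \;=\; \frac{1}{2^n} \sum_{S \subseteq \{1,\ldots,n\}} \chi(\varepsilon_S)\, \chi(f_S(c)),
\]
where $\varepsilon_S \colonequals \prod_{i \in S} \varepsilon_i$ and $f_S \colonequals \prod_{i \in S} f_i$. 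First I would show that extending the sum to all $c \in \F$ introduces an error of $O(d)$: the set $Z \colonequals \{c \in \F : f_i(c) = 0 \text{ for some } i\}$ has size at most $d$, and each summand lies in $[0,1]$. The $S = \emptyset$ term contributes the main term $\#\F/2^n$.

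For each nonempty $S$, the hypothesis that $f_1, \ldots, f_n$ are independent in $\Fbar(x)^\times/\Fbar(x)^{\times 2}$ translates \emph{precisely} to the statement that $f_S$ is not a square in $\Fbar[x]$. This is exactly the nontriviality condition required to invoke Weil's theorem on multiplicative character sums, which gives
\[
\left| \sum_{c \in \F} \chi(f_S(c)) \right| \;\leq\; (\deg f_S - 1)\sqrt{\#\F}.
\]

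To conclude, I would sum the error contributions over $S \neq \emptyset$. Since each $f_i$ lies in exactly $2^{n-1}$ of the subsets, we have $\sum_{S \neq \emptyset}(\deg f_S - 1) \leq 2^{n-1}d$, and the prefactor $1/2^n$ therefore brings the total error to at most $\tfrac{d}{2}\sqrt{\#\F} + O(d)$, which is $O(d\sqrt{\#\F})$ with absolute implied constant. The ``In particular'' statement then follows immediately: once $\#\F$ exceeds an explicit quantity depending on $n$ and $d$, the main term $\#\F/2^n$ dominates the error uniformly in $(\varepsilon_1, \ldots, \varepsilon_n)$, so the count is strictly positive for every choice of square classes.

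The argument is standard once set up, so there is no real obstacle; the main item to be careful about is the bookkeeping that yields an \emph{absolute} constant in $O(d\sqrt{\#\F})$ despite summing $2^n - 1$ error terms. This works out precisely because of the $2^{-n}$ factor produced by expanding the product and the cancellation $\sum_{S \neq \emptyset}\deg f_S \leq 2^{n-1}d$. The other subtle point is linking the hypothesis to Weil's bound: one must note that independence in $\Fbar(x)^\times/\Fbar(x)^{\times 2}$ is equivalent (for polynomials) to the product $f_S$ being a non-square in $\Fbar[x]$ for every nonempty $S$, which is the exact hypothesis under which Weil's bound applies.
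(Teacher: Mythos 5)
Your proof is correct, but it takes a genuinely different route from the paper's. The paper argues geometrically: it forms the $\{\pm 1\}^n$-cover $X$ of $\Aff^1 \setminus Z$ defined by $\varepsilon_i y_i^2 = f_i(x)$, uses the independence hypothesis to deduce that $X$ is geometrically irreducible, bounds its genus by $O(2^n d)$ via Riemann--Hurwitz, and applies the Weil bound for points on curves; the count in question is then $2^{-n}\#X(\F)$, and the $2^{-n}$ cancels the $2^n$ in the genus bound. You instead expand the indicator function via the quadratic character and apply Weil's bound for multiplicative character sums to each nonempty subset product $f_S$; here the $2^{-n}$ prefactor cancels the factor $2^{n-1}$ arising from $\sum_{S \ne \emptyset} \deg f_S = 2^{n-1}d$. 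These are essentially dual formulations resting on the same input (the Riemann hypothesis for curves over finite fields), and in both the independence hypothesis plays the identical role, translated once into geometric irreducibility of the cover and once into non-squareness of each $f_S$ in $\Fbar[x]$ --- your observation that the latter is exactly the hypothesis of the character-sum bound (constants being squares in $\Fbar$) is the right one. Your version buys an explicit constant, roughly $\tfrac{d}{2}\sqrt{\#\F} + O(d)$, and avoids invoking geometric irreducibility and Riemann--Hurwitz; the paper's version is more conceptual and makes the source of the bound (a single irreducible curve) visible. Your bookkeeping for the absolute constant and the treatment of the zero set $Z$ are both sound.
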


\begin{proof}
For each $i$, let $Z_i \subset \Aff^1$ be the set of zeros of $f_i$.
Let $Z=\Union Z_i$, so $\#Z \le d$.
Let $U=\Aff^1\setminus Z$.
Let $X$ be the $\{\pm 1\}^n$-cover of $U$ defined by the equations $\varepsilon_i y_i^2 = f_i(x)$ for $i=1,\ldots,n$.
The independence hypothesis implies that $X$ is geometrically irreducible.
By the Riemann--Hurwitz formula, $X$ is a curve of genus $O(2^n d)$ minus at most $2^n (\#Z+1) = O(2^n d)$ punctures (the $+1$ is for $\infty$).
Now
\[
   2^n \#\{c\in \F : f_i(c)\varepsilon_i\in \F^{\times 2} \textup{ for all }i\} = \#X(\F) = \#\F + O(2^n d \sqrt{\#\F}),
\]
by the Weil bound.
The two claims follow.
\end{proof}

\subsection{Polynomial interpolation}

\begin{lemma} \label{basic}\label{lem:PolynomialInterpolation}
Let \(\F\) be a finite field, let \(n \geq \#\F\) be an integer,
and let $g\colon \F \to \F$ be a function. Then there exists a monic degree $n$ polynomial $f\in\F[x]$ such that $f(c)=g(c)$ for each $c\in\F$.
\end{lemma}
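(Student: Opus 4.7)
The plan is to combine the usual Lagrange interpolation with a vanishing polynomial on $\F$ to bump the degree up to $n$ while preserving monicity.

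First I would invoke Lagrange interpolation over the finite field $\F$: let $q = \#\F$ and choose a polynomial $h \in \F[x]$ of degree at most $q-1$ satisfying $h(c) = g(c)$ for every $c \in \F$. Explicitly, $h(x) = \sum_{c \in \F} g(c)\, \prod_{c' \ne c} (x - c')/(c - c')$. This polynomial is not in general monic of degree $n$, so we need to correct it.

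Next I would use the fact that the polynomial $P(x) \colonequals \prod_{c \in \F}(x - c) = x^q - x$ vanishes identically on $\F$. Then for $n \ge q$ define
\[
f(x) \colonequals h(x) + x^{n-q} P(x).
\]
The product $x^{n-q} P(x) = x^n - x^{n-q+1}$ is monic of degree $n$, and since $\deg h \le q-1 \le n-1$, the sum $f$ is also monic of degree $n$. Moreover, $f(c) = h(c) + c^{n-q}\, P(c) = g(c) + 0 = g(c)$ for every $c \in \F$, which gives the required interpolation property.

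I do not expect a real obstacle here: the only thing to check is that the degree comparison $\deg h < n$ holds, which is immediate from $n \ge q > \deg h$, so that adding $h$ to the monic polynomial $x^{n-q}P(x)$ does not disturb the leading coefficient. This completes the proof.
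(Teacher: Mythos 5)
Your proof is correct and is essentially the same as the paper's: both fix up a Lagrange interpolant of degree at most $\#\F-1$ by adding a monic degree-$n$ polynomial whose values on $\F$ are accounted for. The paper interpolates $g(c)-c^n$ and adds $x^n$, while you interpolate $g$ and add the vanishing multiple $x^{n-\#\F}(x^{\#\F}-x)$; these are trivially equivalent variants of the same idea.
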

\begin{proof}
Lagrange interpolation constructs $h(x)$ of degree at most $\#\F-1$ with the same values as $g(x) - x^n$.
Let $f(x) = x^n + h(x)$.
\end{proof}

\begin{lemma} 
\label{L:special polynomial}
Let \(v\) be a nonarchimedean place of \(k\).
Let $\calU_1,\ldots,\calU_m$ be nonempty open subsets of $\PP^1(k_v)$ whose union $\calU$ contains $\infty$.
For any sufficiently divisible positive integer $d$, 
there exists a degree~$d$ polynomial $g \in k_v[x]$
such that $g(\PP^1(k_v))$ is contained in $\calU$ and meets every~\(\calU_j\).
\end{lemma}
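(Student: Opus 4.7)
The plan is to build $g$ as $g(x) = p_0(x) + \pi^M x^d$, where $\pi$ is a uniformizer of $k_v$, $p_0 \in k_v[x]$ approximates a locally constant step function on a large closed disk $D \subset k_v$ containing the compact set $\PP^1(k_v) \setminus \calU$, and the monomial $\pi^M x^d$ is arranged to be negligibly small on $D$ and dominant on its complement $\PP^1(k_v) \setminus D$.

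For each $j$, pick $b_j \in \calU_j \cap k_v$ (this is possible since $\calU_j$ is a nonempty open subset of $\PP^1(k_v)$) and a small closed ball $V_j = b_j + \pi^{M_j}\calO_v \subseteq \calU_j$. Since $\calU$ is open and contains $\infty$, fix $n_0 \ge 0$ with $V_\infty := \{x : |x| > |\pi|^{-n_0}\} \cup \{\infty\} \subseteq \calU$, and set $D := \{x \in k_v : |x| \le |\pi|^{-n_0}\}$. Partition $D$ into finitely many disjoint open-closed balls $D_1, \ldots, D_s$ with $s \ge m$, together with a surjection $\{1, \dots, s\} \to \{1, \dots, m\}$, $i \mapsto j(i)$. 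Let $F : D \to k_v$ be the locally constant function with $F|_{D_i} \equiv b_{j(i)}$.

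By the nonarchimedean Stone--Weierstrass theorem, $k_v[x]$ is dense in $C(D, k_v)$, so there exists $p_0 \in k_v[x]$ with $\sup_{x \in D}|p_0(x) - F(x)| < \epsilon$ for $\epsilon := \min_j |\pi|^{M_j}$, forcing $p_0(D_i) \subseteq V_{j(i)}$ for each $i$. Let $D_0 := \deg p_0$. For any $d$ sufficiently large, choose an integer $M$ satisfying both $M \ge n_0 d + \val(\epsilon)$ (so that $|\pi^M x^d| \le \epsilon$ on all of $D$) and $M$ small enough that, by the ultrametric, $\pi^M x^d$ strictly dominates $p_0(x)$ and exceeds $|\pi|^{-n_0}$ in absolute value for every $x$ with $|x| > |\pi|^{-n_0}$. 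These are two linear inequalities in $M$, and their common range is nonempty once $d$ exceeds an explicit threshold depending on $n_0$, $D_0$, $\val(\epsilon)$, and the valuation of the largest coefficient of $p_0$. Set $g(x) := p_0(x) + \pi^M x^d$; this is a polynomial of degree exactly $d$ since the coefficient $\pi^M$ of $x^d$ is nonzero and $d > D_0$.

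For $x \in D_i$ the ultrametric gives $|g(x) - b_{j(i)}| < \epsilon$, so $g(x) \in V_{j(i)} \subseteq \calU_{j(i)}$; for $|x| > |\pi|^{-n_0}$ the choice of $M$ yields $|g(x)| = |\pi^M x^d| > |\pi|^{-n_0}$, so $g(x) \in V_\infty \subseteq \calU$; and $g(\infty) = \infty \in \calU$. Therefore $g(\PP^1(k_v)) \subseteq \calU$, and surjectivity of $i \mapsto j(i)$ guarantees that $g$ meets every $\calU_j$. The main obstacle is verifying that the allowed interval for $M$ is nonempty: it is a careful but elementary bookkeeping of ultrametric estimates, and it is what forces the hypothesis that $d$ be sufficiently large (hence sufficiently divisible).
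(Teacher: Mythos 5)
Your proof is correct, but it takes a genuinely different route from the paper's. The paper builds $g$ explicitly as a product $\prod_{i=1}^m\bigl(1+(c_i-1)h_i(x)^M\bigr)$, where the $h_i$ are Lagrange-type polynomials attached to the cosets of $\frakp^e$ in $\calO$; two elementary valuation claims show that $h_i^M$ is a near-indicator function of the $i$th coset once $M$ is divisible by a high power of the residue characteristic, so each coset is sent near some $c_i\in\calU_i$ and the complement of $\calO$ is sent near $\infty$ because the product blows up there. This is entirely self-contained, and the resulting degree $m(m-1)M$ explains the phrasing ``sufficiently divisible.'' You instead invoke the nonarchimedean Stone--Weierstrass (Kaplansky) theorem as a black box to approximate a locally constant step function on a large disk $D$, and then add a single perturbing monomial $\pi^M x^d$ that is negligible on $D$ but dominant outside it. Your bookkeeping for the window of admissible $M$ checks out: the two constraints are roughly $M\ge n_0 d+\val(\epsilon)$ and $M<(n_0+1)(d-\deg p_0)+O(1)$, and the gap grows linearly in $d$. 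What your approach buys is exact control of the degree for \emph{every} sufficiently large $d$ (a stronger conclusion than the lemma's ``sufficiently divisible,'' and equally usable where the lemma is applied), at the cost of importing a nontrivial density theorem; the paper's approach buys self-containedness and a completely explicit polynomial. One small point worth tightening: with $|\pi^M x^d|\le\epsilon$ on $D$ you get $|g(x)-b_{j(i)}|\le\epsilon$ rather than $<\epsilon$, which still lands in the closed ball $V_{j(i)}$, so nothing breaks, but the inequality as written is slightly off.
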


\begin{proof} 
Write $\calO$ for the valuation ring of \(k_v\), write $\frakp$ for its maximal ideal, and \(q \colonequals \#\calO/\frakp\).
By repeating the \(\calU_j\) if necessary, we may enlarge $m$ to assume that \(m=q^{e}\) for some positive integer \(e\).
Let $a_1,\ldots,a_m \in \calO$ be a complete set of representatives for $\calO/\frakp^{e}$.  
For $i=1,\ldots,m$, define 
\[
    h_i(x) \colonequals \prod_{j \ne i} \frac{(x-a_j)}{(a_i - a_j)}.
\]

\emph{Claim~1: If \(b \in a_i + \frakp^{e}\), then $h_i(b) \in 1 + \frakp$.}

\emph{Proof}: For each \(j\neq i\), the factor $\frac{b - a_j}{a_i - a_j} = 
1 + \frac{b - a_i}{a_i - a_j}$ belongs to $1 + \frakp$.

\medskip

\emph{Claim~2: If $b \in a_\ell + \frakp^e$ for some $\ell \ne i$, then $h_i(b) \in \frakp$.}

\emph{Proof}: We have $v(\prod_{j \ne i,\ell} (b-a_j)) = v(\prod_{j \ne i,\ell} (a_j-a_i))$ because the factors in each product are representatives for all cosets except $\frakp^e$ and $a_\ell-a_i + \frakp^e$,
and representatives for the same nonzero coset have the same valuation.
Thus $v(h_i(b)) = v( \frac{b-a_\ell}{a_i-a_\ell} ) \ge e - v(a_i - a_{\ell}) > 0$.

\medskip

For each $i$, choose $c_i \in \calU_i - \{1,\infty\}$. 
Let $M$ be a positive integer.
Now set 
\[
    g(x) = \prod_{i=1}^m (1+(c_i-1)h_i(x)^M).
\]
If $M$ is divisible by a large enough power of $q$, 
then $g$ maps $a_i + \frakp^e$ into $\calU_i$ 
and maps $\PP^1(k_v)-\calO$ into the neighborhood $\calU$ of $\infty$.
\end{proof}

\subsection{Detecting irreducibility}

\begin{lemma}\label{lem:LocalIrred}
    Let \(h\in k[x]\) be an irreducible polynomial and let \(v\) be a place such that \(h\) has a root \(\theta\in k_v\).  If \(g\in k[x]\) is a polynomial such that \(g - \theta\) is irreducible over \(k_v\), then \(h\circ g\) is irreducible over \(k\).
    Moreover, if $h$ and $g-\theta$ are separable, then so is $h \circ g$.
\end{lemma}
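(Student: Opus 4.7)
The plan is to prove irreducibility of $h \circ g$ by a degree count: choose a root $\alpha$ of $g(x) - \theta$ in $\overline{k}$, observe that $\alpha$ is automatically a root of $(h \circ g)(x)$, and show that $[k(\alpha):k]$ equals $\deg(h \circ g) = nm$, where $n \colonequals \deg h$ and $m \colonequals \deg g$. Once this is established, the minimal polynomial of $\alpha$ over $k$ must coincide with $h \circ g$ up to a scalar, forcing $h \circ g$ to be irreducible.

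The key step is to upgrade the given local irreducibility into irreducibility over $k(\theta)$. Since $\theta \in k_v$ satisfies $h(\theta) = 0$, there is a canonical inclusion $k(\theta) \hookrightarrow k_v$ sending the abstract $\theta$ to the chosen element $\theta \in k_v$. Any factorization of $g(x) - \theta$ over $k(\theta)$ would then yield a factorization over $k_v$, contradicting the hypothesis. Hence $g(x) - \theta$ is irreducible of degree $m$ over $k(\theta)$, so $[k(\alpha):k(\theta)] = m$; combined with $[k(\theta):k] = n$ (from irreducibility of $h$), the tower law yields $[k(\alpha):k] = nm$ as desired.

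For separability, I would factor $h(g(x)) = c \prod_{i=1}^{n}(g(x) - \theta_i)$ in $\overline{k}[x]$, where $\theta_1 = \theta, \ldots, \theta_n$ are the roots of $h$. For each $i$, a $k$-automorphism $\sigma_i$ of $\overline{k}$ sending $\theta \mapsto \theta_i$ exists (since $h$ is irreducible) and carries $g(x) - \theta$ to $g(x) - \theta_i$; thus each factor inherits separability from $g(x) - \theta$. The $\theta_i$ are distinct because $h$ is separable, which in turn forces the root sets of distinct factors $g(x) - \theta_i$ and $g(x) - \theta_j$ to be disjoint (else a common root $\alpha$ would satisfy $\theta_i = g(\alpha) = \theta_j$). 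Counting yields $nm$ distinct roots of $h \circ g$, matching its degree.

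The main subtlety lies in the descent of irreducibility from $k_v$ to $k(\theta)$: one needs to keep careful track of the identification of the abstract root $\theta$ of $h$ with its realization as an element of $k_v$, so that the inclusion $k(\theta) \hookrightarrow k_v$ is the one compatible with the chosen $\theta$. Everything else is a direct application of the tower law and standard Galois-conjugation arguments.
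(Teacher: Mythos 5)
Your argument for irreducibility is essentially identical to the paper's: descend the irreducibility of $g-\theta$ from $k_v$ to the subfield $k(\theta)\subset k_v$ (the only point requiring care, which you correctly flag), then multiply degrees in the tower $k\subset k(\theta)\subset k(\alpha)$ to conclude that $h\circ g$ is the minimal polynomial of $\alpha$. For separability the paper simply reruns the same tower computation with separable degrees in place of degrees, whereas you factor $h\circ g = c\prod_i\bigl(g(x)-\theta_i\bigr)$ over $\overline{k}$ and count $nm$ distinct roots via Galois conjugation (using that $g$ has coefficients in $k$, so the conjugating automorphism carries $g-\theta$ to $g-\theta_i$); both are correct, yours being more explicit and the paper's more economical.
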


\begin{proof}
Let $\alpha$ be a zero of $g(x)-\theta$ in some extension of $k(\theta)$,
so $\alpha$ is a zero of $h \circ g$.
Since $g(x)-\theta$ is irreducible over $k_v$, it is irreducible over $k(\theta)$,
so $[k(\alpha):k(\theta)]=\deg g$.
On the other hand, $h$ is irreducible, so $[k(\theta):k] = \deg h$.
Multiplying gives $[k(\alpha):k] = \deg(h \circ g)$, so $h \circ g$ is irreducible over $k$.
If, moreover, $h$ and $g-\theta$ are separable, then the same argument with separable degrees shows that $h \circ g$ is separable.
\end{proof}

\section{Brauer group of a rational function field in one variable}
\label{S:Faddeev}

\begin{thm}[Faddeev]\label{thm:Faddeev}
    Let \(k\) be a field, and let \(k^{\sep}\) be a separable closure.  There is an exact sequence
    \[
    0 \To \Br k \To \ker\left(\Br \kk(\PP^1_k)\to\Br \kk(\PP^1_{k^{\sep}})\right)
    \stackrel{\del}\To \!\!\bigoplus_{t\in (\PP^1_k)^{(1)}}\!\!\HH^1(\kk(t), \Q/\Z) \stackrel{\Cor}\To \HH^1(k, \Q/\Z) \To 0
    \]
    in which $\del = (\del_t)$ is given by residue maps, and $\Cor = \sum_{t \in (\PP^1_k)^{(1)}} \Cor_{\kk(t)/k}$.
\end{thm}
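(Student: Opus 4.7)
The plan is to realize $\ker(\Br \kk(\PP^1_k) \to \Br \kk(\PP^1_{\ksep}))$ as the Galois cohomology group $\HH^2(G, L^\times)$, where $K = \kk(\PP^1_k)$, $L = \kk(\PP^1_{\ksep})$, and $G = \Gal(\ksep/k) = \Gal(L/K)$, and then to compute this group via a filtration of $L^\times$ by constants and divisors. The first step is Hochschild--Serre: the spectral sequence $\HH^p(G, \HH^q(L, \G_m)) \Rightarrow \HH^{p+q}(K, \G_m)$, combined with Hilbert's Theorem~90 (which forces $\HH^1(L, \G_m) = 0$), yields an isomorphism $\HH^2(G, L^\times) \xrightarrow{\sim} \ker(\Br K \to \Br L)$.

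Next I would use the two $G$-equivariant short exact sequences
\[
1 \to \ksep^\times \to L^\times \to \Princ(\PP^1_{\ksep}) \to 0, \qquad 0 \to \Princ(\PP^1_{\ksep}) \to \Div(\PP^1_{\ksep}) \xrightarrow{\deg} \Z \to 0,
\]
the second of which is $G$-equivariantly split by $1 \mapsto [\infty]$ for any $k$-rational point $\infty \in \PP^1(k)$. Writing $\Div(\PP^1_{\ksep}) = \bigoplus_t \Ind_{G_t}^G \Z$ (over the closed points $t$ of $\PP^1_k$, with $G_t = \Gal(\ksep/\kk(t))$), Shapiro's lemma together with the dimension-shift $\HH^i(G_t, \Z) \cong \HH^{i-1}(G_t, \Q/\Z)$ for $i \ge 2$ (coming from $0 \to \Z \to \Q \to \Q/\Z \to 0$ and the vanishing of $\HH^{\ge 1}(G_t, \Q)$) gives $\HH^1(G, \Div) = 0$ and $\HH^2(G, \Div) = \bigoplus_t \HH^1(\kk(t), \Q/\Z)$, with the map induced by $\deg$ into $\HH^2(G, \Z) = \HH^1(k, \Q/\Z)$ equal to $\sum_t \Cor_{\kk(t)/k}$. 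The splitting then forces $\HH^1(G, \Princ) = 0$, $\HH^2(G, \Princ) = \ker(\Cor)$, and surjectivity of $\Cor$. Feeding this into the long exact sequence of the first SES leaves
\[
0 \to \Br k \to \HH^2(G, L^\times) \to \HH^2(G, \Princ) \xrightarrow{\delta} \HH^3(G, \ksep^\times),
\]
and reduces the theorem to showing $\delta = 0$.

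The main obstacle is this vanishing of $\delta$. I would argue it as follows: the inclusion $\Princ \hookrightarrow \Div$ afforded by the splitting fits into a morphism of extensions from the first SES to
\[
\tilde\xi \colon 0 \to \ksep^\times \to L^\times \oplus \Z \to \Div \to 0,
\]
namely the direct sum of the first SES with the trivial extension of $\Z$ (with middle map $f \mapsto (f,0)$ and right-hand map $(f,n) \mapsto \divv(f) + n[\infty]$). By functoriality of connecting homomorphisms, $\delta$ factors as $\HH^2(G, \Princ) \hookrightarrow \HH^2(G, \Div) \to \HH^3(G, \ksep^\times)$, where the second arrow is cup product with $[\tilde\xi] \in \Ext^1_G(\Div, \ksep^\times)$. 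Shapiro's lemma for $\Ext$ together with the compatibility of $\Ext^1$ with direct sums in the first argument identifies this Ext group with $\prod_t \HH^1(G_t, \ksep^\times)$, and each factor vanishes by Hilbert's Theorem~90 applied to $\kk(t)$. Hence $\delta = 0$, and combining with the surjectivity of $\Cor$ yields the asserted four-term exact sequence. Identification of the middle arrow with the sum of the classical residue maps $\partial_t$ is then a routine unwinding: projection to the $t$-summand of $\Div$ factors through the $G_t$-equivariant order map $\ord_t \colon L^\times \to \Z$, which via Shapiro recovers the usual definition of $\partial_t$ coming from the Galois cohomology of the Henselization at $t$.
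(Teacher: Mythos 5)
Your proposal is correct, and it is essentially a self-contained writeup of the argument the paper simply cites (\cite{CTS-BrauerBook}*{Proposition~1.5.1 and Theorem~1.5.2}): the same identification $\ker(\Br \kk(\PP^1_k) \to \Br \kk(\PP^1_{\ksep})) \cong \HH^2(G, L^\times)$ via Hochschild--Serre, the same filtration of $L^\times$ by constants, principal divisors, and divisors, Shapiro plus the dimension shift $\HH^i(G_t,\Z)\cong \HH^{i-1}(G_t,\Q/\Z)$, and the splitting by a rational point at infinity to kill the obstruction in $\HH^3(G,(\ksep)^\times)$ — with the crucial adaptation, exactly as the paper indicates, of stopping at $\HH^2(G,L^\times)$ rather than invoking Tsen's theorem. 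Your packaging of the vanishing of the connecting map as $\Ext^1_G(\Div,(\ksep)^\times)=\prod_t \HH^1(G_t,(\ksep)^\times)=0$ is a clean way to organize that step, and your implicit use of $G_t = \Gal(\ksep/L_t)$ with $L_t$ the maximal separable subextension of $\kk(t)$ is consistent with the remark following the theorem.
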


\begin{proof}
    A proof for perfect $k$ can be found in~\cite{CTS-BrauerBook}*{Proposition~1.5.1 and Theorem~1.5.2}.  
    The same proof yields the more general statement given here after the minor change of replacing an algebraic closure \(\kbar\) with a separable closure \(k^{\sep}\).  
    One detail: the proof of \cite{CTS-BrauerBook}*{Proposition~1.5.1} shows that
    \(\HH^2(\Gal(k^{\sep}/k), \kk(\PP^1_{k^{\sep}})) = \ker\left(\Br \kk(\PP^1_k)\to\Br \kk(\PP^1_{k^{\sep}})\right)\), and, when $k$ is perfect, simplifies this by using $\Br \kk(\PP^1_{k^{\sep}}) = 0$ (Tsen's theorem); we just skip the last step.
    (When $k$ is imperfect, \(\Br \kk(\PP^1_{k^{\sep}})\) is nontrivial.)
\end{proof}

\begin{remark}
    For \(t\in (\PP^1_k)^{(1)}\), we have \(\HH^1(\kk(t), \Q/\Z) \isom \HH^1(L_t, \Q/\Z)\), where \(L_t\subset \kk(t)\) is the maximal separable subextension.
\end{remark}

\section{Generalities on conic bundles}
\label{S:generalities}

Let $k$ be a field of characteristic not~$2$.

\subsection{Conics}\label{subsec:conics}
A \defi{conic} over $k$ is a closed subscheme of $\PP^2 \colonequals \PP^2_k$ cut out by a nonzero quadratic form.
A conic is \defi{split} if it is isomorphic to $\PP^1$, or, equivalently, is smooth and has a $k$-point.

\begin{lemma}
\label{L:conic split by quadratic extension}
Let $X$ be a conic over $k$.
Let $a \in k^\times$.
Then $X_{k(\sqrt{a})}$ is split if and only if $X$ is isomorphic to $x^2 - a y^2 - b z^2 = 0$ for some $b \in k^\times$. 
\end{lemma}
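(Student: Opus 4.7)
The \((\Leftarrow)\) direction is immediate: the conic \(x^2 - ay^2 - bz^2 = 0\) acquires the \(k(\sqrt a)\)-point \((\sqrt a : 1 : 0)\), so it is split there. For \((\Rightarrow)\), I would first observe that \(X_{k(\sqrt a)}\) being split forces \(X\) to be smooth. If \(a \in k^{\times 2}\) or \(X(k) \ne \emptyset\), then \(X\) already splits over \(k\), so \(X \isom \PP^1_k\); the conic \(x^2 - a y^2 - z^2 = 0\) has the \(k\)-point \((1 : 0 : 1)\) and is also \(\isom \PP^1_k\), so taking \(b = 1\) suffices.

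In the remaining case, \(a \notin k^{\times 2}\) and \(X(k) = \emptyset\). The plan is to exhibit a distinguished \(k\)-rational line in \(\PP^2\) meeting \(X\) in the Galois-conjugate pair of \(k(\sqrt a)\)-points, and then diagonalize. Pick \(P \in X(k(\sqrt a))\) and let \(\sigma\) generate \(\Gal(k(\sqrt a)/k)\). Since \(P \notin X(k)\) we have \(\sigma(P) \ne P\), and the line \(L \subset \PP^2\) through \(P\) and \(\sigma(P)\) is \(\sigma\)-stable, hence descends to a \(k\)-line. After a \(k\)-linear change of coordinates I may take \(L = \{z = 0\}\) and \(P = (\theta : 1 : 0)\) with \(\theta = p + q\sqrt a\), \(q \in k^\times\). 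Restricting the defining equation \(f\) of \(X\) to \(L\) gives \(f(x,y,0) = \alpha(x-\theta y)(x-\sigma\theta \, y) = \alpha\bigl((x-py)^2 - q^2 a\, y^2\bigr)\) for some \(\alpha \in k^\times\). After the substitution \(x \mapsto x + py\) and then completing the square in \(x\) and \(y\) to eliminate the \(xz\) and \(yz\) cross-terms, \(f\) takes the diagonal form \(\alpha x^2 - \alpha q^2 a\, y^2 + c\, z^2\), where \(c \in k^\times\) (otherwise \(X\) would be singular at \((0:0:1)\), contradicting smoothness). Dividing by \(\alpha\), substituting \(y \mapsto y/q\), and setting \(b \colonequals -c/\alpha\) yields \(X \isom \{x^2 - a y^2 - b z^2 = 0\}\).

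The principal obstacle is the sign-sensitive bookkeeping in the diagonalization: one must check that the coefficient of \(y^2\) comes out to exactly \(-a\) (rather than \(+a\) or some non-\(a\) scalar multiple), which rests on the identity \((\theta - p)(\sigma\theta - p) = -q^2 a\) together with the availability of \(q \in k^\times\) to absorb the factor of \(q^2\) by a \(k\)-rescaling. A cleaner alternative would be to invoke the correspondence between smooth conics and quaternion algebras: the class of \(X\) in \(\Br k\) lies in \(\ker(\Br k \to \Br k(\sqrt a))\), which by a classical computation equals \(\{(a, b) : b \in k^\times\}\), and the class \((a,b)\) is represented by the conic \(x^2 - a y^2 - b z^2 = 0\).
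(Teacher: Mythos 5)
Your argument is correct, but it takes a genuinely different route from the paper. The paper's entire proof is a one-line reduction: via the correspondence between smooth conics and quaternion algebras, the statement is exactly \cite{CTS-BrauerBook}*{Prop.~1.1.9} (that a quaternion algebra is split by $k(\sqrt{a})$ if and only if it is of the form $(a,b)$) --- i.e., precisely the ``cleaner alternative'' you sketch in your last sentence. Your main argument is instead a self-contained geometric one: descend the secant line through the conjugate pair $\{P,\sigma(P)\}$ to a $k$-rational line, normalize it to $\{z=0\}$, factor $f|_L$ as $\alpha\bigl((x-py)^2-q^2a\,y^2\bigr)$, and complete the square. The computation checks out ($\theta+\sigma\theta=2p$, $\theta\sigma\theta=p^2-q^2a$, and $q\in k^\times$ because $X(k)=\emptyset$ forces $\theta\notin k$), the degenerate cases are handled correctly, and the nonvanishing of the final $z^2$-coefficient does follow from smoothness. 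Two points are used silently but are immediate: $f|_L\not\equiv 0$ because a smooth conic contains no line, and in the $(\Leftarrow)$ direction the diagonal conic with $a,b\in k^\times$ is smooth since $\Char k\neq 2$. What each approach buys: the paper's is shorter and defers to standard machinery on quaternion algebras; yours is elementary and works directly with the quadratic form, and it actually yields slightly more, namely that the isomorphism can be realized by a $k$-linear change of coordinates on $\PP^2$ (projective equivalence rather than just abstract isomorphism), which is more than the lemma asks for.
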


\begin{proof}
Because of the correspondence between conics and quaternion algebras,
this is equivalent to \cite{CTS-BrauerBook}*{Prop.~1.1.9}. 
\end{proof}

\begin{lemma}[\cite{CTS-BrauerBook}*{Eqn.~(7.3) and Prop.~7.1.3}]
\label{lem:BrConic}
Let $X$ be a smooth conic over $k$.
Let $[X] \in \Br k$ be the class of the associated quaternion algebra.
Then $\Br k \to \Br X$ is surjective, with kernel generated by $[X]$.
\end{lemma}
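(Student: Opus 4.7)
The plan is to run the Hochschild--Serre (Leray) spectral sequence
\[
E_2^{p,q} = \HH^p\!\left(k, \HH^q_{\et}(X_{k^{\sep}}, \G_m)\right) \Longrightarrow \HH^{p+q}_{\et}(X, \G_m)
\]
attached to the structure morphism $X \to \Spec k$, and to read off both assertions from its five-term low-degree exact sequence.

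First I would gather the geometric cohomology input. Every smooth conic in characteristic $\neq 2$ can be diagonalized as $ax^2+by^2=cz^2$, and after adjoining $\sqrt{a}$ and $\sqrt{c}$ it acquires a rational point, so $X_{k^{\sep}} \cong \PP^1_{k^{\sep}}$. This yields $\HH^0(X_{k^{\sep}},\G_m) = (k^{\sep})^\times$ by properness and geometric integrality; $\HH^1(X_{k^{\sep}},\G_m) = \Pic \PP^1_{k^{\sep}} = \Z$ with trivial $\Gal(k^{\sep}/k)$-action, since the degree map is Galois equivariant; and $\HH^2(X_{k^{\sep}},\G_m) = \Br \PP^1_{k^{\sep}} = 0$ (from Tsen when $k$ is perfect, and in general from Theorem~\ref{thm:Faddeev} applied over $k^{\sep}$ together with $\Br k^{\sep} = 0$).

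Combining these with Hilbert~90, which gives $\HH^1(k,(k^{\sep})^\times)=0$, and with the vanishing $\HH^1(k,\Z) = \Hom_{\mathrm{cts}}(\Gal(k^{\sep}/k),\Z) = 0$, the five-term sequence collapses to
\[
0 \To \Pic X \To \Z \To \Br k \To \Br X \To 0.
\]
In particular $\Br k \to \Br X$ is surjective, and its kernel equals the image of the transgression $d_2 \colon \Z \to \Br k$, so both assertions of the lemma reduce to identifying this transgression.

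The final and most delicate step, which I expect to be the main obstacle, is to show $d_2(1) = [X]$. The cleanest route is to interpret the transgression as the obstruction to descending the Galois-invariant degree-$1$ line bundle on $X_{k^{\sep}}$ to a line bundle on $X$ and to match this obstruction, via the Severi--Brauer dictionary, with the class of the associated quaternion algebra of $X$. Granting this identification, the kernel of $\Br k \to \Br X$ is $\langle [X]\rangle$, as desired. As a sanity check: when $X(k)\neq\emptyset$ we have $X\cong\PP^1_k$ and $[X]=0$, recovering $\Br k \xrightarrow{\sim} \Br \PP^1_k$; when $X(k)=\emptyset$, the anticanonical class has degree $2$ and every line bundle on $X$ has even degree, so $\Pic X\to\Z$ has image $2\Z$ and the cokernel is $\Z/2$, consistent with $[X]$ being $2$-torsion.
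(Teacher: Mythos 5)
The paper offers no proof of this lemma at all --- it is quoted directly from \cite{CTS-BrauerBook} --- so the relevant comparison is with the standard argument behind that citation, which is exactly the Hochschild--Serre computation you set up. Your reductions are all correct: $X_{\ksep}\cong\PP^1_{\ksep}$ by diagonalizing and adjoining square roots, Hilbert~90 kills $E_2^{1,0}$, $\HH^1(k,\Z)=0$ kills $E_2^{1,1}$, and $\Br \PP^1_{\ksep}=0$ kills $E_2^{0,2}$, so the low-degree sequence does collapse to $0\To\Pic X\To\Z\xrightarrow{\,d_2\,}\Br k\To\Br X\To 0$. This settles surjectivity and identifies the kernel with $\im(d_2)$.

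The gap is the step you flag yourself: you never establish $d_2(1)=[X]$, and that identification \emph{is} the content of the assertion that the kernel is generated by $[X]$, so as written the proof stops short at its main point. The good news is that you do not need to compute the transgression; your closing ``sanity check'' already contains the ingredients of a complete finish. First, $[X]\in\ker(\Br k\to\Br X)$: since $X$ is regular and integral, $\Br X$ injects into $\Br\kk(X)$, and the quaternion algebra of $X$ is split over $\kk(X)$ because the conic has a $\kk(X)$-point, namely its generic point. Second, the kernel is isomorphic to $\coker(\Pic X\to\Z)$, which has order $1$ or $2$ because $\omega_X^{-1}$ has degree $2$; and it is nontrivial exactly when $X$ carries no divisor class of degree $1$, which by Riemann--Roch on a genus-$0$ curve happens exactly when $X(k)=\emptyset$, i.e.\ exactly when $[X]\neq 0$. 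Comparing orders of the two subgroups $\langle[X]\rangle\subseteq\ker(\Br k\to\Br X)$ gives equality in both cases, with no need for the Severi--Brauer descent computation you propose.
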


\subsection{Conic bundles over \texorpdfstring{\(\PP^1\)}{P1}}\label{subsec:Conic bundles}

Given a morphism of varieties $\pi \colon X \to \PP^1 \colonequals \PP^1_k$ and $t \in \PP^1$, let $X_t = \pi^{-1}(t)$.
Let $\eta \in \PP^1$ be the generic point, so $X_\eta$ is the generic fiber.

In this paper, a \defi{conic bundle} is a regular projective geometrically integral surface $X$ with a proper map to $\PP^1$ whose fibers are conics and whose generic fiber is smooth; because of \cite{CTS-BrauerBook}*{Lemma~11.3.2}, there is no harm in assuming also that the fibers are irreducible and reduced (but not necessarily geometrically irreducible).
The degeneracy locus $D$ of $X \to \PP^1$ consists of the $t \in \PP^1$ such that $X_t$ is not smooth (over $\kk(t)$).
A conic bundle is \defi{split} if its generic fiber is split.

\subsection{Conic bundles split by a constant quadratic extension}\label{subsec:QuadraticConic}

The base for our conic bundles will be $\PP^1 \colonequals \Proj k[u_0,u_1]$.
Let $u=u_0/u_1$ and $\Aff^1 \colonequals \Spec k[u] \subset \PP^1$.
There is a bijection between squarefree homogeneous polynomials $F \in k[u_0,u_1]$ of even degree and squarefree polynomials $f \in k[u]$ given by
\[  
    F(u_0,u_1) \mapsto F(u,1) \qquad\text{and}\qquad f(u) \mapsto f\left(\textstyle \frac{u_0}{u_1}\right)u_1^{2 \left\lceil \deg f/2 \right\rceil}.
\]

Let $a \in k^\times$.
Let $f \in k[u]$ be a squarefree polynomial of degree $d \ge 1$.
Let $F \in k[u_0,u_1]$ be the corresponding homogeneous polynomial of even degree $2 \lceil d/2 \rceil$.
The regular affine surface
\[
    y^2 - a z^2 - f(u) = 0 \qquad \textup{in $\Aff^3 = \Spec k[u,y,z] \to \Aff^1 = \Spec k[u]$}
\]
is an open subvariety of the conic bundle $X=X_{a,f} \stackrel{\pi}\to \PP^1$ defined by
\[
   w_0^2 -a w_1^2 - F(u_0,u_1) w_2^2 = 0 \qquad \textup{in $\PP\left(\OO_{\PP^1}\oplus \OO_{\PP^1}\oplus \OO_{\PP^1}\left(\textstyle{\left\lceil\frac{d}2\right\rceil}\right)\right)$.}
\]
Its degeneracy locus $D$ is the zero locus $V(F) \subset \PP^1$.
In particular, $\infty \in D$ if and only if $F(1,0)=0$, which happens if and only if $d$ is odd.
If $d$ is even, then the fiber $X_\infty$ is the smooth conic $w_0^2 - a w_1^2 - b w_2^2$, where $b$ is $F(1,0)$, the leading coefficient of $f$.

A polynomial in $k[u]$ or a homogeneous polynomial $k[u_0,u_1]$ is \defi{separable} if it has no repeated factors over an algebraic closure $\kbar$; this condition is stronger than squarefree if $k$ is imperfect.
If $f$ is separable, then $X_{a,f}$ is smooth over $k$.

\begin{lemma}
\label{L:conic bundle split by quadratic extension}
        Let \(\pi\colon X \to \PP^1\) be a conic bundle over \(k\).  
        Let $a \in k^\times$.
        Then \(X_{k(\sqrt{a})} \to \PP^1_{k(\sqrt{a})}\) is split if and only if \(X\) is birational to \(X_{a,f}\) over $\PP^1$ for some squarefree polynomial $f$.
\end{lemma}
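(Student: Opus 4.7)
The plan is to reduce the lemma to the corresponding statement about the generic fibers, which follows from Lemma \ref{L:conic split by quadratic extension}. Two conic bundles $X_1, X_2 \to \PP^1$ are birational over $\PP^1$ if and only if their generic fibers are isomorphic as conics over $\kk(\PP^1) = k(u)$, because birational maps between smooth projective curves are isomorphisms. Moreover, $X_{k(\sqrt{a})} \to \PP^1_{k(\sqrt{a})}$ is split precisely when the generic fiber $C$ of $X$ becomes split after base change to $k(u)(\sqrt{a}) = k(\sqrt{a})(u)$, since the generic fiber of $X_{k(\sqrt{a})}$ over $\PP^1_{k(\sqrt{a})}$ is exactly $C_{k(u)(\sqrt{a})}$.

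For the ($\Leftarrow$) direction, if $X$ is birational over $\PP^1$ to $X_{a,f}$, then $C$ is isomorphic to the generic fiber $\{x^2 - ay^2 - f(u)z^2 = 0\}$ of $X_{a,f}$, and Lemma \ref{L:conic split by quadratic extension} applied over $k(u)$ immediately gives that this conic is split by $k(u)(\sqrt{a})$. For the ($\Rightarrow$) direction, Lemma \ref{L:conic split by quadratic extension} applied to $C$ over $k(u)$ yields $C \isom \{x^2 - ay^2 - b(u)z^2 = 0\}$ for some $b \in k(u)^\times$. I would then perform the standard squarefree reduction modulo $k(u)^{\times 2}$: write $b = p/q$ with $p,q \in k[u]$, factor $pq = s^2 g$ with $g \in k[u]$ squarefree, so $b = (s/q)^2 g$ and $C \isom \{x^2 - ay^2 - g(u)z^2 = 0\}$ over $k(u)$. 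If $\deg g \geq 1$, take $f = g$ and conclude.

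The main obstacle is handling the case where $g$ happens to be a nonzero constant $c \in k^\times$, since $X_{a,f}$ is defined in Section \ref{subsec:QuadraticConic} only for $\deg f \geq 1$, and no squarefree polynomial of positive degree lies in the same class as $c$ modulo $k(u)^{\times 2}$ alone---one must also modify by norms from $k(u)(\sqrt{a})/k(u)$. If $a \notin k^{\times 2}$, I would set $f \colonequals c(u^2 - a)$: this is squarefree of degree $2$ in $k[u]$ (as $u^2 - a$ is irreducible over $k$), and $c \cdot f = c^2(u^2 - a) = (cu)^2 - a \cdot c^2$ is a norm from $k(u)(\sqrt{a})$, so $(a, c) = (a, f)$ in $\Br k(u)$ and the two conics are isomorphic over $k(u)$. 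If instead $a \in k^{\times 2}$, then $x^2 - ay^2$ is isotropic over $k(u)$, so every conic of the form $\{x^2 - ay^2 - b z^2 = 0\}$ over $k(u)$ is split; hence both $X$ and $X_{a,f}$ (for any squarefree $f$ of positive degree, e.g. $f = u$) have generic fiber $\isom \PP^1_{k(u)}$ and are therefore birational over $\PP^1$.
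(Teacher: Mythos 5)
Your proof is correct and follows essentially the same route as the paper's: apply Lemma~\ref{L:conic split by quadratic extension} over $k(u)$ to identify the generic fiber with $x^2-ay^2-bz^2$, then adjust $b$ modulo squares to a squarefree polynomial. Your extra care in the case where the squarefree part degenerates to a constant $c\in k^\times$ (multiplying by the norm $u^2-a$, or noting the conic is already split when $a\in k^{\times 2}$) goes beyond the paper's one-line "multiply by a square," and correctly patches the fact that $X_{a,f}$ is only defined for $\deg f\ge 1$.
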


 \begin{proof}
 By Lemma~\ref{L:conic split by quadratic extension} over $k(u)$,
 $X_{k(\sqrt{a})}$ is split if and only if the generic fiber $X_\eta$ of $X \to \PP^1$ is isomorphic to $x^2 - a y^2 - f z^2 = 0$ for some $f \in k(u)^\times$.
 If the latter holds, we can multiply $f$ by a square to assume that $f$ is a squarefree polynomial.
\end{proof}

\subsection{Brauer groups of conic bundles}\label{subsec:BrConic}

    \begin{lemma}\label{lem:TrivialConicBundle}
        Let \(a \in k^{\times 2}\), let \(f\) be a squarefree polynomial, and let \(X \colonequals X_{a,f}\).  
        Then \(\Br X = \Br_0 X = \Br k\).
    \end{lemma}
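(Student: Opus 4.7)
The plan is to exploit the hypothesis $a \in k^{\times 2}$ to show that $\pi\colon X \to \PP^1$ admits a section, which makes the generic fiber split and forces $X$ to be $k$-rational; the claim will then follow from standard facts about Brauer groups of smooth projective rational varieties.

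Writing $a = c^2$ with $c \in k^\times$, the defining equation of $X$ factors as
\[
(w_0 - c w_1)(w_0 + c w_1) = F(u_0,u_1) w_2^2,
\]
and the assignment $(u_0 : u_1) \mapsto (c : 1 : 0)$ defines a morphism $\sigma\colon \PP^1 \to X$ satisfying $\pi \circ \sigma = \id_{\PP^1}$, since $c^2 - a = 0$ independently of $u$. Hence the generic fiber $X_\eta$ is a smooth conic over $K \colonequals \kk(\PP^1)$ admitting a $K$-point, so $X_\eta \isom \PP^1_K$ (equivalently, $[X_\eta]$ is trivial in $\Br K$, as one sees from Lemma~\ref{lem:BrConic}). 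Consequently $\kk(X) = K(t)$ for some transcendental $t$, so $X$ is $k$-birational to $\PP^2_k$.

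Assuming $X$ is smooth projective (which holds whenever $f$ is separable), birational invariance of the Brauer group for smooth projective varieties, combined with $\Br \PP^n_k = \Br k$, gives $\Br X = \Br k$. Applying $\sigma$ to any $k$-point of $\PP^1$ produces a $k$-rational point of $X$, whose pullback map $\Br X \to \Br k$ is a retraction of the structural map $\Br k \to \Br X$; hence $\Br_0 X = \Br k$ as well. Combining these equalities yields $\Br X = \Br_0 X = \Br k$.

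The delicate step is invoking birational invariance, which formally requires smoothness rather than mere regularity (the two conditions coincide over perfect base fields, which is the main intended setting). To avoid smoothness one can instead combine the regularity-based embedding $\Br X \hookrightarrow \Br X_\eta = \Br K$ with Theorem~\ref{thm:Faddeev}, and verify by a local residue computation at the codimension-one points of $X$ sitting above each degenerate fiber (both components of which are defined over $k$, being cut out by $w_0 = \pm c w_1$) that unramification of a class on $X$ is equivalent to unramification at every point of $\PP^1$, yielding $\Br X = \Br k$ directly.
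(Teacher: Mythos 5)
Your argument opens exactly as the paper's does: writing $a=b^2$, the generic fiber has the rational point $[b:1:0]$, hence is isomorphic to $\PP^1_{\kk(\PP^1)}$ (your explicit section is a nice way to see this, and it also gives the injectivity of $\Br k \to \Br X$ that makes $\Br_0 X \isom \Br k$ immediate). The divergence is in how you upgrade this to a statement about $\Br X$. The paper applies \cite{Poonen2017}*{Lemma~6.9.8} twice; that lemma is a statement about proper morphisms of \emph{regular} integral noetherian schemes with generic fiber birational to projective space, so it yields $\Br X \isom \Br \PP^1 \isom \Br k$ with no smoothness hypothesis. You instead invoke birational invariance of $\Br$ for smooth projective varieties plus $\Br\PP^2_k=\Br k$, and this is precisely what creates the smooth-versus-regular problem you flag: the lemma is stated for $f$ squarefree (not separable), and this generality is genuinely used, since Theorem~\ref{theorem:BrConicBundle} is deliberately stated for squarefree $f$ and its part~\ref{part:2torsion} invokes the present lemma after base change to $k(\sqrt{a})$; over an imperfect global field $X$ can then be regular but not smooth.

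Your proposed fallback for that case is not yet a proof. Theorem~\ref{thm:Faddeev} only describes $\ker\bigl(\Br \kk(\PP^1_k)\to\Br \kk(\PP^1_{k^{\sep}})\bigr)$, and it is not clear a priori that a class of $\Br X$, viewed in $\Br \kk(\PP^1)$ via $\Br X\subseteq\Br X_\eta\isom\Br \kk(\PP^1)$, lies in that kernel when $k$ is imperfect (where $\Br \kk(\PP^1_{k^{\sep}})\neq 0$ and $X_{k^{\sep}}$ need not be regular); this is exactly the delicacy that the proof of Theorem~\ref{theorem:BrConicBundle}\ref{part:contained in Br X} handles by restricting to $2^\infty$-torsion and using the full residue sequence on $X$. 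The cleanest repairs are either to quote the regular-scheme fibration lemma as the paper does, or to observe that purity for the Brauer group of regular schemes makes $\Br$ a birational invariant of regular proper integral $k$-varieties, not only smooth ones. With either repair in place, your proof is complete; as written, it fully covers the separable case (which is all the paper's explicit constructions need) but leaves the general squarefree case as a sketch.
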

    
    \begin{proof}
    Write $a=b^2$ with $b \in k^\times$.
        The generic fiber $X_\eta$ of $X \to \PP^1$ is a smooth conic with a rational point $[b:1:0]$, so $X_\eta \isom \PP^1_{k(u)}$.
        Applying \cite{Poonen2017}*{Lemma~6.9.8} twice yields $\Br X \isom \Br \PP^1 \isom \Br k$.
    \end{proof}

\begin{theorem}\label{theorem:BrConicBundle}
        Let \(a\in k^{\times}\).
        Let $f \in k[u]$ be a squarefree polynomial, let \(F \in k[u_0,u_1]\) be the corresponding  homogeneous squarefree polynomial of even degree, and let $\pi\colon X \to \PP^1_k$ be the conic bundle morphism of $X = X_{a,f}$.
    \begin{enumerate}[label*=(\alph*), font=\normalfont]
\item We have $2 \, \overline{\Br}\, X = 0$. \label{part:2torsion}
     
\item  \label{part:BrP1}
        Let $V(F)_{\spl}$ be the set of closed points $t \in V(F)$ that split in $\PP^1_{k(\sqrt{a})} \to \PP^1_k$, i.e., are such that $a \in \kk(t)^{\times 2}$.
        Define
        \begin{align*}
        \calG &\colonequals \{g \in \kk(\PP^1)^\times \mid v_t(g) \equiv 0 \pmod{2} \textup{ for all $t \notin V(F)$}\} \\
        \calG_{\spl} &\colonequals \{g \in \kk(\PP^1)^\times \mid v_t(g) \equiv 0 \pmod{2} \textup{ for all $t \notin V(F)_{\spl}$}\}.
        \end{align*}
        Then
        \[
    1 \To  \calG_{\spl} \To \calG \xrightarrow{\;(a,-)\;} \frac{\Br \kk(\PP^1)}{\Br k}
\]
is exact.

\item \label{part:contained in Br X}
        The image of the composition
            $\calG \xrightarrow{\;(a,-)\;} \Br \kk(\PP^1) \stackrel{\pi^*}\To \Br \kk(X)$
        is contained in $\Br X$, and
\[
    1 \To \langle f, \calG_{\spl} \rangle \To \calG \To \overline{\Br}\, X \To 0
\]
is exact.

\item \label{part:f in Gsplit}
The following are equivalent:
\begin{enumerate}[label=(\roman*), ref=(\roman*), font=\normalfont]
\item \label{I:f} $f \in \calG_{\spl}$;
\item \label{I:V(F)} $V(F)_{\spl} = V(F)$;
\item \label{I:norm} $f$ is an element of $k^\times$ times a norm from the extension $\kk(\PP^1_{k(\sqrt{a})})/\kk(\PP^1)$;
\item \label{I:(a,f)} The element $(a,f) \in \Br \kk(\PP^1)$ lies in $\Br k$;
\item \label{I:del_t} $\del_t (a,f) = 1$ for all $t \in (\PP^1)^{(1)}$;
\item \label{I:square} $a \in \kk(t)^{\times 2}$ for all $t \in V(F)$;
\item \label{I:Gsplit} $\calG_{\spl}=\calG$.
\end{enumerate}

\item \label{part:order of Br-bar X}
Write $F=F_1 \cdots F_n$ with $F_i$ irreducible; suppose that $n \ge 1$.
For each $i$, let $f_i = F_i(u,1)$.
Suppose that for each $i$, the degree of $F_i$ is even and $a$ is not a square in $k[u]/(f_i(u))$.
Then the images of $(a,f_i)$ generate the $\F_2$-vector space $\overline{\Br}\, X$, 
and the only $\F_2$-linear relation they satisfy is that their sum is $0$.
In particular, $\overline{\Br}\, X \isom (\Z/2\Z)^{n-1}$.
\end{enumerate}
    \end{theorem}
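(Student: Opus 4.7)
The plan is to apply part \ref{part:contained in Br X} after making the groups $\calG$ and $\calG_{\spl}$ explicit under the hypotheses. Since each $F_i$ is irreducible of positive even degree, $F_i \neq u_1$, so $\infty \notin V(F)$ and $V(F) = \{t_1, \dots, t_n\}$ consists of finite closed points with residue fields $\kk(t_i) \cong k[u]/(f_i)$. The non-square hypothesis then gives $V(F)_{\spl} = \emptyset$, so $\calG_{\spl}$ consists of $g \in \kk(\PP^1)^\times$ with even valuation at every closed point of $\PP^1$. Since $\Pic \PP^1 = \Z$, any such $g$ has the form $c h^2$ with $c \in k^\times$ and $h \in \kk(\PP^1)^\times$, so $\calG_{\spl} = k^\times \cdot \kk(\PP^1)^{\times 2}$.

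Next I would show that $\calG/\calG_{\spl}$ is an $\F_2$-vector space of dimension $n$ with basis $[f_1], \dots, [f_n]$. For spanning: given $g \in \calG$, there exist $e_i \in \{0,1\}$ such that $g' \colonequals g \cdot \prod_i f_i^{-e_i}$ has even valuation at each $t_i$; moreover $v_\infty(g')$ is even, because $v_\infty(g)$ is even (as $\infty \notin V(F)$) and $\sum_i e_i \deg f_i$ is even (each $\deg f_i = \deg F_i$ is even by hypothesis), so $g' \in \calG_{\spl}$. For independence: a relation $\prod_i f_i^{e_i} = c h^2$ in $\kk(\PP^1)^\times$ forces each $e_j$ to be even by comparing valuations at $t_j$ on both sides.

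Combining this with part \ref{part:contained in Br X}, we obtain $\overline{\Br}\,X \cong \calG/\langle f, \calG_{\spl}\rangle$. Since $f$ is a $k^\times$-multiple of $\prod_i f_i$, its image in $\calG/\calG_{\spl}$ equals $\sum_i [f_i]$, a nonzero element of $\F_2^n$. Quotienting by the one-dimensional subspace it spans yields $\overline{\Br}\,X \cong (\Z/2\Z)^{n-1}$, generated by the images of $(a, f_i)$ subject only to the relation $\sum_i (a, f_i) = 0$.

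The main obstacle is only careful bookkeeping: the even-degree hypothesis is used precisely to ensure that the candidate representatives $\prod_i f_i^{e_i}$ lie in $\calG$ (i.e.\ have even valuation at $\infty$), and the irreducibility of each $F_i$ together with the non-square hypothesis is what makes $V(F)_{\spl} = \emptyset$ and forces $[f_1], \dots, [f_n]$ to be an $\F_2$-basis rather than merely a spanning set with hidden relations.
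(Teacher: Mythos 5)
Your argument for part~(e) is correct and is essentially the paper's own: the paper likewise deduces (e) from (c) by observing that the non-square hypothesis forces $V(F)_{\spl}=\emptyset$ and that $f_1,\dots,f_n$ then map to an $\F_2$-basis of $\calG/\calG_{\spl}$. You supply the details the paper leaves implicit --- that each $F_i$ has positive even degree, hence $F_i\ne u_1$ and $\infty\notin V(F)$; that $\calG_{\spl}=k^\times\cdot\kk(\PP^1)^{\times 2}$ because an everywhere-even principal divisor on $\PP^1$ is twice a principal divisor; and the spanning/independence check for the classes $[f_i]$, including the role of the even-degree hypothesis in controlling the valuation at $\infty$ --- and all of these are right.

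The gap is that the statement is the entire five-part theorem, and you prove only part~(e), taking part~(c) as given. Parts (a)--(d) carry essentially all of the content of the result. Part (a) needs the Hochschild--Serre spectral sequence for the quadratic extension $k(\sqrt a)/k$ (together with Lemma~\ref{lem:TrivialConicBundle} for $X_{k(\sqrt a)}$) to identify $\overline{\Br}\,X$ with $\HH^1(\Gal(k(\sqrt a)/k),\Pic X_{k(\sqrt a)})$ and conclude $2\,\overline{\Br}\,X=0$. Part (b) needs Faddeev's exact sequence and the computation $\del_t(a,g)=a^{v_t(g)}$. Part (c) --- the input to your whole argument --- requires the commutative diagram comparing residues on $\PP^1$ and on $X$, the computation of $\ker\Pi$ (the residue at $t$ dies on $X$ exactly when $t\in V(F)$ and the residue is a power of the class of $a$), the surjectivity of $\calG\to\ker\Pi$, and the identification of $\del[X_\eta]$ with the image of $f$. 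Part (d) is a cycle of implications resting on (b) and Faddeev. As written, your proposal is a correct and well-documented proof of (e) conditional on (c), not a proof of the theorem.
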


    \begin{proof}
    If \(\Char(k) = 0\), this follows from~\cite{CTS-BrauerBook}*{Prop.~11.3.4}.  A similar argument gives the result in positive characteristic, but we do not know a reference, so we provide a proof.
    
        \textbf{\ref{part:2torsion}}         Let \(L:= k(\sqrt{a})\) and \(G:= \Gal(L/k)\).
        Lemma~\ref{lem:TrivialConicBundle} gives the second row of the diagram
        \[
        \begin{tikzcd}
        0
        \arrow[r]
        &
        \Br k 
        \arrow[r] \arrow[d] 
        & 
        \Br X
        \arrow[r] \arrow[d]
        &
        \overline{\Br}\, X
        \arrow[r] \arrow[d] 
        & 
        0
        \\
        0 
        \arrow[r]
        &
        (\Br L)^G
        \arrow[r]
        &
        (\Br X_L)^G 
        \arrow[r]
        & 
        0
        \end{tikzcd}
\]        
The first vertical map is surjective because of the Hochschild--Serre spectral sequence for group cohomology \cite{SerreGaloisCohomology}*{I.\S2.6(b)}.
Therefore the snake lemma produces an exact sequence
\[
   0 \To \ker(\Br k \to \Br L) \To \ker(\Br X \to \Br X_L) \To \overline{\Br}\,X \To 0.
\]
On the other hand, the Hochschild--Serre spectral sequence for \'etale cohomology \cite{Poonen2017}*{Thm.~6.7.5} yields an exact sequence
\[
   0 \To \ker(\Br k \to \Br L) \To \ker(\Br X \to \Br X_L) \To \HH^1(G,\Pic X_L) \To \HH^3(G,L^\times),
\]
 and, since $G$ is cyclic, the last term is isomorphic to $\HH^1(G,L^\times)=0$.
 Comparing the sequences shows that $\overline{\Br}\, X \isom \HH^1(G, \Pic X_L)$, which is killed by $\#G=2$. 

\textbf{\ref{part:BrP1}} For $g \in \calG$, the following are equivalent:
\begin{itemize}
\item $(a,g) \in \Br k$;
\item $\del_t (a,g) = 1$ for all $t \in (\PP^1)^{(1)}$ (by Theorem~\ref{thm:Faddeev});
\item $a^{v_t(g)} \in \kk(t)^{\times 2}$ for all $t \in (\PP^1)^{(1)}$ (since $\del_t (a,g) = a^{v_t(g)}$ in $\kk(t)^\times/\kk(t)^{\times 2}$);
\item $v_t(g) \equiv 0 \pmod{2}$ for all $t \notin V(F)_{\spl}$ (by definition of $V(F)_{\spl}$, since $g\in\calG$); and
\item $g \in \calG_{\spl}$.
\end{itemize}

\textbf{\ref{part:contained in Br X}}
    By Theorem~\ref{thm:Faddeev} (for the top row), \cite{Poonen2017}*{Thm. 6.8.3} (for the bottom row) and the functoriality of pullback (for commutativity), we have the following commutative diagram of exact sequences
        \begin{equation}\label{diag:pullbackBr}
        \begin{tikzcd}
        (\Br k)[2^{\infty}] 
        \arrow[r, hook] \arrow[d] 
        & 
         (\Br \kk(\PP^1))[2^{\infty}] \arrow[d, "\pi^*"] 
         \arrow[r, "\del", two heads] &
         \ker \left(\displaystyle{\bigoplus_{t\in (\PP^1)^{(1)}}} \HH^1(\kk(t), \Q_2/\Z_2) \xrightarrow{\Cor}\HH^1(k, \Q_2/\Z_2)\right)
         \arrow[d, "\Pi"]
        \\
        (\Br X)[2^{\infty}] 
        \arrow[r, hook] & (\Br X_{\eta})[2^{\infty}] 
         \arrow[r] & \displaystyle{\bigoplus_{t\in (\PP^1)^{(1)}}}\,
         \displaystyle{\bigoplus_{\substack{x\in X^{(1)} \\\pi(x)= t} } }  
         \HH^1(\kk(x), \Q_2/\Z_2), 
        \end{tikzcd}
        \end{equation}
        where $\Pi = (\pi_t^*)_{t \in (\PP^1)^{(1)}}$.
        Applying the snake lemma, Lemma~\ref{lem:BrConic}, and \ref{part:2torsion} gives the exact sequence
        \begin{equation} \label{diag:Residues}
           \langle [X_{\eta}]\rangle \stackrel{\del}{\To} \ker\Pi \To \overline{\Br}\, X \To 0.
        \end{equation}

        Let us compute $\ker \Pi$.
        Since the fiber \(X_t\) is geometrically reducible if and only if \(t\in V(F)\), 
        \[
                \ker \Bigl( \pi_t^*\colon \HH^1(\kk(t), \Q_2/\Z_2) \to\displaystyle{\bigoplus_{\substack{x\in X^{(1)} \\\pi(x)= t} }}          \HH^1(\kk(x), \Q_2/\Z_2) \Bigr) = \begin{cases}
             1, & \textup{ if $t\notin V(F)$;}\\
             \langle a \rangle & \textup{ if $t\in V(F)$.}
         \end{cases}
        \]
        Thus, \(\ker \Pi\) is the intersection of \(\ker (\pi_t^*)_{t\in (\PP^1)^{(1)}} = \bigoplus_{t\in V(F)}\langle a\rangle \oplus \bigoplus_{t\notin V(F)} \{1\}\) with \(\ker(\Cor)\).  Note that for \((a^{c_t})_{t\in V(F)}\), 
        \[
        \Cor((a^{c_t})_{t\in V(F)}) = \prod_{t\in V(F)}a^{c_t\deg(t)} = a^{\sum c_t\deg(t)}.
        \]  
        Therefore,
        \[
            \ker \Pi = \Bigl\{ (a^{c_t})_{t\in V(F)} \in  \bigoplus_{t\in V(F)}\langle a\rangle \oplus \bigoplus_{t\notin V(F)} \{1\} : \sum_{t\in V(F)} c_t\deg(t)\equiv 0 \pmod 2 \Bigr\}
        \]
        
If $g \in \calG$, then $\del_t(a,g) = a^{v_t(g)} \in \ker \pi_t^*$, so $\calG$ maps into $\ker \Pi$.  

We now prove that \(\calG \to \ker \Pi\) is surjective.
Suppose that $(a^{c_t})_{t \in V(F)}$ is an element of $\ker \Pi$, where $c_t \in \Z$ and $\sum_{t\in V(F)} c_t\deg(t) =2r$ for some $r \in \Z$.
Then $ \sum_{t\in V(F)} c_t\cdot [t] - 2r [\infty] = \divv(g)\) for some function $g \in \kk(\PP^1)^\times$.  
Now $g \in \calG$, and $\del_t(g) = a^{v_t(g)}$, which is $a^{c_t}$ if $t\in V(F)$ 
and $1$ if $t\notin V(F)$.

We have injections $\calG/\calG_{\spl} \xrightarrow{\; (a,-) \;} \frac{\Br \kk(\PP^1)}{\Br k} \stackrel{\del}\To \ker(\Cor)$, by \ref{part:BrP1} and Theorem~\ref{thm:Faddeev}, respectively.
The composition defines an isomorphism $\calG/\calG_{\spl} \to \ker \Pi$, by the previous paragraph.
This isomorphism maps $f$ to $\del [X_\eta]$,
so $\calG/\langle f, \calG_{\spl} \rangle \isom (\ker \Pi)/\langle \del [X_\eta] \rangle \isom \overline{\Br}\, X$, by \eqref{diag:Residues}.

\textbf{\ref{part:f in Gsplit}}
The following are immediate:
\ref{I:f}$\Rightarrow$\ref{I:V(F)}$\Rightarrow$\ref{I:Gsplit}$\Rightarrow$\ref{I:f} and
\ref{I:(a,f)}$\Rightarrow$\ref{I:del_t}$\Rightarrow$\ref{I:square}$\Rightarrow$\ref{I:V(F)}.
Moreover, \ref{I:V(F)}$\Rightarrow$\ref{I:norm} since if $V(F)_{\spl}=V(F)$, then the \emph{divisor} of $f$ is a norm.
Finally, \ref{I:norm}$\Rightarrow$\ref{I:(a,f)} since $(a,g)=0$ in $\Br \kk(\PP^1)$ whenever $g \in \kk(\PP^1)^\times$ is a norm.

\textbf{\ref{part:order of Br-bar X}}
The assumption on $a$ implies that $V(F)_{\spl}=\emptyset$,
so $f_1,\ldots,f_n$ map to an $\F_2$-basis of $\calG/\calG_{\spl}$.
Also, $f=f_1\cdots f_n$.
Thus \ref{part:order of Br-bar X} follows from \ref{part:contained in Br X}.
\qedhere
    \end{proof}

\section{Conic bundles with prescribed Brauer--Manin pairing}
\label{S:base change}
    
In this section, we prove Theorem~\ref{thm:Main} by using the following theorem to construct a conic bundle over \(\PP^1\).

\begin{thm}\label{thm:MainBaseChangeThm}
    Let \(k\) be a global field of characteristic not \(2\), let $n,r \ge 0$, and let \(S_1, \ldots, S_r\subset \widehat{(\Z/2\Z)^n}\).
    Then there exist a conic bundle \(X\to \PP^1_k\) that is smooth over $k$ and split by a quadratic extension of~$k$, an injection \( (\Z/2\Z)^n \injects \Br X \), and places \(v_1, \ldots, v_r\) of \(k\) such that
    \begin{enumerate}[label*=(\roman*), font=\normalfont]
        \item \label{part:BaseChangeBr}
            The composition \( (\Z/2\Z)^n \injects \Br X \surjects \overline{\Br}\, X\) is an isomorphism;
        \item \label{part:S_i}
        For $j=1,\ldots,r$, the image of \(X(k_{v_j}) \to \widehat{(\Z/2\Z)^n}\) is  \(S_j\); and
        \item \label{part:constant}
        For all places \(w \notin \{v_1,\ldots,v_r\}\), the image of \(X(k_w)\to \widehat{(\Z/2\Z)^n}\) is $\{0\}$. 
    \end{enumerate}
\end{thm}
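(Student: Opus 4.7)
The plan is to construct $X$ as a pullback conic bundle: starting from a ``model'' $Y_0 \colonequals X_{a, f^{(0)}}$ (in the notation of Section~\ref{subsec:QuadraticConic}), set $X \colonequals X_{a,\, f^{(0)}\circ\phi}$ for a polynomial $\phi \in k[v]$ to be chosen. First I would pick a non-square $a \in k^\times$ and a separable $f^{(0)} = f_1^{(0)}\cdots f_{n+1}^{(0)} \in k[u]$ with each $f_i^{(0)}$ irreducible of even degree and $a$ not a square in $k[u]/(f_i^{(0)})$. By part~(e) of Theorem~\ref{theorem:BrConicBundle} applied to $Y_0$, the classes $(a, f_i^{(0)})$ generate $\overline{\Br}\,Y_0 \isom (\Z/2\Z)^n$ subject to the single relation that their sum vanishes. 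The fiber-product projection $\psi\colon X \to Y_0$ pulls $(a, f_i^{(0)})$ back to $(a,\, f_i^{(0)}\circ\phi)$ in $\Br X$, so every local evaluation map $X(k_w) \to \widehat{(\Z/2\Z)^n}$ factors through $Y_0$ via~$\psi$; in particular, if each $f_i^{(0)}\circ\phi$ is irreducible over $k$ (which we will enforce using Lemma~\ref{lem:LocalIrred}), then applying part~(e) to $X$ gives condition~(i) of the theorem.

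Let $T$ be the (finite) set of places consisting of the archimedean ones, the $v_j$, and all places of bad reduction for $Y_0$. At any $w \notin T$, $Y_0$ extends to a smooth proper $\calO_w$-model, and since $\Br \calO_w = 0$ for a henselian DVR with finite residue field, every Brauer class pulls back via an $\calO_w$-point to zero, so the $Y_0$-evaluation map on $Y_0(k_w)$ is identically zero, and hence the $X$-evaluation on $X(k_w)$ is as well. I would then specify target open sets for $\phi$ at each $w \in T$: at each $v_j$, pick an open set $\calU_{j,s} \subset \PP^1(k_{v_j})$ for every $s \in S_j$ over which $Y_0$ has a local point with evaluation equal to $s$; at each remaining bad place $w \in T\setminus\{v_1,\ldots,v_r\}$, pick an open set $\calU_w$ containing $\infty$ over which the $Y_0$-evaluation vanishes identically. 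Existence of the $\calU_{j,s}$ requires the $v_j$-image of $Y_0$ in $\widehat{(\Z/2\Z)^n}$ to already contain $S_j$, which I would arrange by constructing $f^{(0)}$ with prescribed local values at $v_j$ using Proposition~\ref{P:Weil bound} and Lemma~\ref{lem:PolynomialInterpolation}; existence of $\calU_w$ around $\infty$ is secured by choosing the leading coefficient of each $f_i^{(0)}$ to be a norm from $k(\sqrt{a})$, so that the fiber $Y_{0,\infty}$ is a split conic and a residue neighborhood of $\infty$ is evaluation-trivial.

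The polynomial $\phi$ is then built by applying Lemma~\ref{L:special polynomial} at each $w \in T$ to produce a local polynomial $\phi_w \in k_w[v]$ of a common sufficiently divisible degree $d$ whose image lies in the prescribed union of open sets (hitting every $\calU_{j,s}$ when $w = v_j$). Weak approximation on the coefficient space $k^{d+1}$ yields a global $\phi \in k[v]$ of degree $d$ arbitrarily close to every $\phi_w$, and by compactness of $\PP^1(k_w)$ and openness of each target, $\phi(\PP^1(k_w))$ stays inside the prescribed set for every $w \in T$. This gives image $S_j$ at each $v_j$ and image $\{0\}$ at every other place. Imposing one additional auxiliary local condition on $\phi$ at a place outside $T$ enforces, via Lemma~\ref{lem:LocalIrred}, irreducibility of each $f_i^{(0)}\circ\phi$ and separability of $f^{(0)}\circ\phi$, making $X$ smooth over $k$ and split by $k(\sqrt{a})$ by construction.

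The hardest step will be the simultaneous preliminary design of $f^{(0)}$: we must satisfy several incompatible-looking constraints at once --- each $f_i^{(0)}$ irreducible of even degree with $a$ non-square in $k[u]/(f_i^{(0)})$; leading coefficients norms from $k(\sqrt{a})$; local image of $Y_0(k_{v_j})\to\widehat{(\Z/2\Z)^n}$ containing $S_j$; and a trivial-evaluation neighborhood of $\infty$ at each bad place outside $\{v_1,\ldots,v_r\}$. Achieving all of this in a single polynomial requires iterated, place-by-place application of Lemma~\ref{lem:PolynomialInterpolation} and Proposition~\ref{P:Weil bound}, together with the flexibility of choosing the degrees of the $f_i^{(0)}$ large enough to absorb the local interpolation constraints. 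Once such an $f^{(0)}$ exists, the construction of $\phi$ and the verification of the theorem proceed as outlined.
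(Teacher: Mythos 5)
Your overall strategy --- pull back a model conic bundle $Y_0 = X_{a,f^{(0)}}$ along a polynomial $\phi$ assembled from local conditions via Lemma~\ref{L:special polynomial} and weak approximation --- is the same as the paper's. However, there is a genuine gap at the fiber over $\infty$. Since $\phi$ is a polynomial, $\phi(\infty)=\infty$, so $X_\infty = Y_{0,\infty}$; you deliberately make $Y_{0,\infty}$ a \emph{split} conic (leading coefficients norms from $k(\sqrt{a})$), so $X_\infty(k_{v_j})\neq\emptyset$ and every point of $X_\infty$ evaluates to $0$ (its evaluation is $(a,b_i)$ with $b_i$ a norm). Hence $0$ lies in the image of $X(k_{v_j})\to\widehat{(\Z/2\Z)^n}$ for every $j$, and condition~(ii) fails whenever $0\notin S_j$ --- which is precisely the case $S_1=\widehat{(\Z/2\Z)^n}\setminus\{0\}$ needed to deduce Theorem~\ref{thm:Main}. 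The same obstruction appears in your use of Lemma~\ref{L:special polynomial}: its hypothesis forces $\infty\in\bigcup_{s\in S_j}\calU_{j,s}$, i.e.\ the evaluation of the infinite fiber must already lie in $S_j$. The paper resolves this with a step you are missing: before building $g$, use weak approximation to choose $c\in(\PP^1\setminus D)(k)$ with $c\in\calU_{v_j}$ for all $j$ and change coordinates so that $c=\infty$; the new fiber at infinity then evaluates into each $S_j$, and the fact that it may fail to have points at other places is repaired by an extra condition that $g(\PP^1(k_w))$ meet $\tilde{\pi}(\tilde{X}(k_w))$ there.

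Two smaller omissions. First, to apply Theorem~\ref{theorem:BrConicBundle}\ref{part:order of Br-bar X} to $X$ you need not only that each $f_i^{(0)}\circ\phi$ is irreducible over $k$ but also that $a$ is not a square in $k[u]/(f_i^{(0)}\circ\phi)$; this does not follow from irreducibility over $k$, since $k[u]/(f_i^{(0)}\circ\phi)$ strictly contains $k[u]/(f_i^{(0)})$ and could acquire $\sqrt{a}$. The paper secures it by choosing the auxiliary place $w_i$ to split completely in $k(\sqrt{a},\theta_i)$ and applying Lemma~\ref{lem:LocalIrred} over $k(\sqrt{a})$ as well as over $k$. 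Second, Lemma~\ref{L:special polynomial} is stated only for nonarchimedean places, so your place-by-place construction of $\phi$ on $T$ does not cover real places; the paper sidesteps this by taking $a$ totally positive, which makes every class $(a,\cdot)$ vanish in $\Br k_w$ for archimedean $w$ and makes $X_{k_w}$ birational to a $\PP^1$-bundle. With the coordinate-change step added and these two points patched, your argument matches the paper's proof.
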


\begin{proof}[Proof of Theorem~\ref{thm:Main}]
    Apply Theorem~\ref{thm:MainBaseChangeThm} with \(r \colonequals 1\), $n \colonequals N+1$, and \hbox{\(S_1 \colonequals  \widehat{(\Z/2\Z)^n}\setminus\{0\}\)} to produce a conic bundle \(X\) with \(S = \hat{B} \setminus \{0\}\), where $B \colonequals \im((\Z/2\Z)^n \to \Br X)$.  Corollary~\ref{cor:2torsion} shows that no proper subgroup of \(B\) gives an obstruction.  Since \(B\) surjects onto \(\overline{\Br}\, X\), this Brauer--Manin obstruction is not captured by any subgroup generated by $N$ elements.
\end{proof}

\begin{remark}
    One can also take \(r>1\), provided there are sets \(S_1,\ldots,S_r\) with $\sum S_i = \widehat{(\Z/2\Z)^n}\setminus\{0\}$.  
    If $1 \le r \le n-1$, then such sets $S_1,\ldots,S_r$ of size $\ge 2$ exist, 
    by a minor variation of Theorem~\ref{thm:sharpboundnew}\ref{I:example of sum S_i}. 
\end{remark}

In Section~\ref{sec:StartingConicBundle}, we produce a conic bundle \(\tilde{\pi}\colon\tilde{X}\to\PP^1\) with an injection $(\Z/2\Z)^n \to \Br X$ such that the local evaluation map $\tilde{X}(k_v) \to \widehat{(\Z/2\Z)^n}$ is surjective for several places $v$.  Finally, in Section~\ref{sec:ProofOfMainBaseChange}, we prove Theorem~\ref{thm:MainBaseChangeThm} by constructing a map \(g \colon \PP^1\to \PP^1\) such that the base change of  \(\tilde{X} \to \PP^1\) by $g$ has the desired properties.

\subsection{Conic bundles with surjective evaluation at arbitrarily many places}\label{sec:StartingConicBundle}

\begin{proposition}\label{P:surjective}\label{prop:StartingConicBundle}
Given $n,r \ge 0$, there exist 
a conic bundle \(\tilde{\pi} \colon \tilde{X}\to \PP^1\) that is smooth over $k$ and split by a quadratic extension of $k$,  
an injection $(\Z/2\Z)^n \injects \Br \tilde{X}$,
and nonarchimedean places \(\tilde{v}_1, \ldots, \tilde{v}_r\) of \(k\) not lying over \(2\)
such that 
\begin{enumerate}[font=\normalfont]
    \item \label{part:IndepBrauerClasses}
    The composition \( (\Z/2\Z)^n \injects \Br \tilde{X} \surjects \overline{\Br}\, \tilde{X}\) is an isomorphism; 
    \item Every point in the degeneracy locus of \(\tilde{\pi}\) has even degree;\label{part:EvenDegree}
    \item For all archimedean places \(w\), the conic bundle \(\tilde{X}_{k_w}\) is birational to a \(\PP^1\)-bundle;\label{part:archimedean}
    \item For $j=1,\ldots,r$, the map \(\phi_{\tilde{v}_j}\colon \tilde{X}(k_{\tilde{v}_j}) \to \widehat{(\Z/2\Z)^n}\) is surjective; and\label{part:Surjective}
    \item For all places \(w\), we have \(0\in \im \phi_w\).\label{part:ImContains0}
\end{enumerate}
\end{proposition}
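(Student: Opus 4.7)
I plan to construct $\tilde X$ as a conic bundle $X_{a,f}$ of the form $y^2 - az^2 = f(u)$ (see Section~\ref{subsec:QuadraticConic}), where $a \in k^\times$ is a carefully chosen nonsquare and $f = f_1 \cdots f_{n+1}$ is a product of $n+1$ distinct monic irreducible polynomials of even degree. The case $n=0$ is handled by taking $\tilde X = \PP^1_k$ with $f=1$, so assume $n \ge 1$. The guiding principle is Theorem~\ref{theorem:BrConicBundle}\ref{part:order of Br-bar X}, which gives $\overline{\Br}\,\tilde X \cong (\Z/2\Z)^n$ with generators $(a,f_1),\ldots,(a,f_{n+1})$ subject to the unique relation $\sum_i (a,f_i) = 0$; the injection in property~\ref{part:IndepBrauerClasses} sends the standard basis of $(\Z/2\Z)^n$ to $(a,f_1),\ldots,(a,f_n)$.

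I first select the global data. Pick $r$ distinct nonarchimedean places $\tilde v_1,\ldots,\tilde v_r$ of $k$ lying over odd primes, with residue fields $\F_j \colonequals \kk(\tilde v_j)$ sufficiently large to apply Proposition~\ref{P:Weil bound}. Choose $a \in k^\times$ that is a square at every archimedean place (for instance, positive at each real place) and that has odd valuation at each $\tilde v_j$. Under this choice, $L_j \colonequals k_{\tilde v_j}(\sqrt{a})/k_{\tilde v_j}$ is a ramified quadratic extension, and for $c \in \calO_{\tilde v_j}^\times$ the Hilbert symbol $(a,c)_{\tilde v_j}$ vanishes if and only if $\bar c \in \F_j^{\times 2}$.

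Next I construct the $f_i \in k[u]$, each monic of even degree, using Lemma~\ref{lem:PolynomialInterpolation} to achieve simultaneously: each $f_i$ is irreducible over $k$ (using Lemma~\ref{lem:LocalIrred} at an auxiliary place to enforce irreducibility); $a$ is not a square in $k[u]/(f_i)$ (arranged by making $f_i$ have a root in $k_{\tilde v_1}$, where $a$ is a nonsquare); and at each $\tilde v_j$ the reductions $\bar f_1,\ldots,\bar f_{n+1}$ are separable polynomials independent in $\overline{\F}_j(u)^\times/\overline{\F}_j(u)^{\times 2}$. Set $\tilde X \colonequals X_{a,f}$. Then properties~\ref{part:EvenDegree}, \ref{part:archimedean}, and \ref{part:ImContains0} follow quickly: each $V(F_i)$ is a single closed point of degree $\deg F_i$, which is even; the hypothesis $a \in k_w^{\times 2}$ at every archimedean $w$ combined with Lemma~\ref{lem:TrivialConicBundle} gives that $\tilde X_{k_w}$ is birational to a $\PP^1$-bundle; and the monicity of the $f_i$ makes the fiber $\tilde X_\infty$ carry the $k$-rational point $(1:0:1)$, whose pullback of each $(a,f_i)$ equals $(a,1) = 0 \in \Br k$, yielding $0 \in \im \phi_w$ for every $w$.

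The crux is property~\ref{part:Surjective}. For $u \in \calO_{\tilde v_j}^\times$ whose reduction $\bar u$ avoids the zeros of $\bar f$, the Hilbert symbol $(a,f_i(u))_{\tilde v_j}$ is determined by whether $\bar f_i(\bar u)$ is a square in $\F_j$. Proposition~\ref{P:Weil bound} applied to the $\bar f_i$ shows that every sign pattern $\epsilon \in \{0,1/2\}^{n+1}$ is realized by some $\bar u \in \F_j$. The condition that $u$ is the $u$-coordinate of a $k_{\tilde v_j}$-point of $\tilde X$---equivalently, $f(u) \in N(L_j^\times)$---reduces to $\sum_i \epsilon_i = 0$, and Hensel's lemma lifts such $\bar u$ to $\calO_{\tilde v_j}$. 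Projecting the realized patterns onto the first $n$ coordinates then covers all of $\widehat{(\Z/2\Z)^n}$, giving the desired surjectivity. The hard part of the whole argument will be engineering the global polynomials $f_i$ with all the prescribed local behavior at every $\tilde v_j$ simultaneously while preserving global irreducibility; combining Lemmas~\ref{lem:PolynomialInterpolation} and~\ref{lem:LocalIrred} (possibly via an auxiliary place) is where this is delicate.
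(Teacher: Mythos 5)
Your proposal is correct and takes essentially the same approach as the paper: the conic bundle $X_{a,f}$ with $f$ a product of $n+1$ monic irreducible polynomials of even degree, $a$ totally positive with odd valuation at the chosen places, Proposition~\ref{P:Weil bound} plus Hensel's lemma for the surjectivity of $\phi_{\tilde{v}_j}$ (using that the realizable sign patterns are exactly those summing to zero, which still project onto all of $\widehat{(\Z/2\Z)^n}$), and the $k$-point at infinity for $0\in\im\phi_w$. The one organizational difference is that the paper chooses the polynomials \emph{first} and only then selects the places and $a$ to be compatible, so no delicate interpolation or auxiliary-place argument is needed to build the $f_i$; reordering your choices in this way would eliminate the step you flag as the hard part.
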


\begin{proof}
Choose distinct monic separable irreducible polynomials \(\tilde{f}_0,\ldots,\tilde{f}_{n}\) of even degree.
Let \(\tilde{f} = \prod \tilde{f}_i\).
Let \(\tilde{v}_1, \ldots, \tilde{v}_r\) be nonarchimedean places not dividing \(2\) such that \(\tilde{f}_i \in \calO_{\tilde{v}_j}[u]\) for all \(i\) and \(j\), the reduction \(\tilde{f} \bmod \tilde{v}_j\) is separable for all \(j\), and each residue field \(\F_{\tilde{v}_j}\) is large enough that the set considered in Proposition~\ref{P:Weil bound} is nonempty for all choices of $\varepsilon_j$.
Choose \(a \in k^\times\) such that \(a\) is totally positive, \(\tilde{v}_j(a)=1\) for all \(j\), and $k(\sqrt{a})$ does not embed in $k[u]/(\tilde{f}_i)$ for any~$i$.  
Let \(\tilde{X} \colonequals X_{a,\tilde{f}}\).
Let $(\Z/2\Z)^n \to \Br \tilde{X}$ be the homomorphism sending the $i$th standard generator to $(a,\tilde{f}_i)$ for $i =1,\ldots,r$ (not $0$).

We claim that these have the desired properties.  
Since $\tilde{f}$ is monic, \(\tilde{X}\) has a \(k\)-point $P$ at infinity.
Then $P^* (a,\tilde{f}_i) = (a,1) = 0$ for all $i$, so $\phi_w(P)=0$ for all $w$, so \eqref{part:ImContains0} holds.
Since each \(\tilde{f}_i\) is irreducible of even degree,~\eqref{part:EvenDegree} holds.  Since $a$ is not a square in $k[u]/(\tilde{f}_i)$ for any $i$, Theorem~\ref{theorem:BrConicBundle}\ref{part:order of Br-bar X} implies~\eqref{part:IndepBrauerClasses}.  The assumption that \(a\) is totally positive implies~\eqref{part:archimedean}.

It remains to show~\eqref{part:Surjective}, that \(\phi_{\tilde{v}_j}\) is surjective for each \(i\).  Fix \(w \colonequals \tilde{v}_j\), to simplify notation.  
Suppose that $\varepsilon_0,\ldots,\varepsilon_n \in \F_w^\times$ have product $1$.
Proposition~\ref{P:Weil bound} produces $c \in \F_w$ such that $\tilde{f}_i(c)\varepsilon_i\in \F_w^{\times 2}$ for all $i$.
Multiplying gives $\tilde{f}(c)=1$.
Lift $c$ to some $c' \in \calO_w$.
By Hensel's lemma, $\tilde{f}(c') \in \calO_w^{\times 2}$, so there exists $P \in \tilde{X}(k_w)$ with $u$-coordinate $c'$.
Then $\phi_w(P)$ maps $(a,\tilde{f}_i)$ to $\inv_w (a,\tilde{f}_i(c'))$, which is $0$ or $1/2$ according to whether $\varepsilon_i \in \F_w^{\times 2}$ or not, since $w(a)=1$.
Since $\varepsilon_1,\ldots,\varepsilon_n$ are arbitrary (and then $\varepsilon_0$ is determined), $\phi_w(P)$ can be made to equal any homomorphism $(\Z/2\Z)^n \to \Q/\Z$.
\end{proof}

\subsection{Proof of Theorem~\ref{thm:MainBaseChangeThm}}\label{sec:ProofOfMainBaseChange}
Given our fixed \(n, r\), we apply Proposition~\ref{prop:StartingConicBundle} to obtain a conic bundle \(\tilde{\pi}\colon\tilde{X} = X_{a,\tilde{f}} \to \PP^1\) split by a quadratic extension of $k$, an injection \((\Z/2\Z)^n \injects \Br X\), and places \(\tilde{v}_1, \dots, \tilde{v}_r\) such that each \(\phi_{\tilde{v}_j}\) is surjective.
There are finitely many additional places $w$ for which $\phi_w$ is nonconstant; call them \(\tilde{v}_{r+1}, \dots, \tilde{v}_m \); they are nonarchimedean because of Proposition~\ref{prop:StartingConicBundle}\eqref{part:archimedean}.
In proving Theorem~\ref{thm:MainBaseChangeThm}, we are free to enlarge $r$ to $m$ and set the new $S_j$ equal to $\{0\}$; now the \(\phi_{\tilde{v}_j}\) are no longer all surjective, but $S_j \subset \im \phi_{\tilde{v}_j}$ still holds for all $j$, by Proposition~\ref{prop:StartingConicBundle}\eqref{part:ImContains0}.
In addition, for any place $w \notin \{\tilde{v}_1,\ldots,\tilde{v}_r\}$, the constant map $\phi_w$ has image $\{0\}$ by Proposition~\ref{prop:StartingConicBundle}\eqref{part:ImContains0}.
Let $v_j=\tilde{v}_j$ for each $j$.
Let $D$ be the degeneracy locus of $\tilde{\pi}$, which consists of closed points $D_i$ of even degree, the zero loci in $\Aff^1$ of the irreducible factors $\tilde{f}_i$ of $\tilde{f}$.

For $j=1,\ldots,r$, let $\calU_{v_j} = \tilde{\pi}(\phi_{v_j}^{-1}(S_j))$, which is the union over $s \in S_j$ of the nonempty open sets $\calU_{v_j,s} \colonequals \tilde{\pi}(\phi_{v_j}^{-1}(s))$ in $\PP^1(k_{v_j})$.
We can arrange that $\infty \in \calU_{v_j}$ for all $j$:
use weak approximation to pick $c \in (\PP^1 \setminus D)(k)$ such that $c \in \calU_{v_j}$ for every $j$,
and change coordinate on $\PP^1$ to assume $c=\infty$.
Then the fiber $\tilde{X}_\infty$ is a smooth curve and $\tilde{X}_\infty(k_{v_j}) \neq \emptyset$ for all~\(j\).
For each $i$, let $\theta_i$ be a zero of $\tilde{f}_i$ in some finite extension of $k$ and choose a distinct nonarchimedean place $w_i \notin \{v_1,\ldots,v_r\}$ such that $\tilde{X}_{\infty}(k_{w_i})\neq\emptyset$ and $w_i$ splits completely in $k(\sqrt{a},\theta_i)$; then $\sqrt{a}, \theta_i \in k_{w_i}$.

Consider the following conditions on a polynomial $g$ over $k$: 
\begin{enumerate}
    \item \label{part:w_j}
        For each $i$, the polynomial \(g - \theta_i\) is irreducible and separable over \(k_{w_i}\);
    \item \label{part:v_i}
        For each $j$, we have \(g(\PP^1(k_{v_j}))\subset \calU_{v_j}\) and \(g(\PP^1(k_{v_j}))\) meets $\calU_{v_j,t}$ for each $t \in S_j$; and 
    \item \label{part:w}
        For each place \(w\) such that \(\tilde{X}_{\infty}(k_w) =\emptyset\), the set $g(\PP^1(k_w))$ meets $\tilde{\pi}(\tilde{X}(k_w))$.
\end{enumerate}
These are open conditions at finitely many different places, each satisfiable in sufficiently divisible degrees (\eqref{part:v_i} by Lemma~\ref{L:special polynomial}), so by weak approximation there is a global polynomial $g$ satisfying them all.

Let $\pi \colon X = X_{a,\tilde{f} \circ g} \to \PP^1$ be the pullback of $\tilde{\pi} \colon \tilde{X} \to \PP^1$ along $g$.
The degeneracy locus of $\pi$ is $g^{-1}(D) = \Union g^{-1}(D_i)$, and each $g^{-1}(D_i)$ is the zero locus of $\tilde{f}_i \circ g$.
Condition~\eqref{part:w_j} and Lemma~\ref{lem:LocalIrred} imply that each $\tilde{f}_i \circ g$ is irreducible and separable, so $X$ is smooth over $k$.
Moreover, Lemma~\ref{lem:LocalIrred} over $k(\sqrt{a})$ implies that $\tilde{f}_i \circ g$ remains irreducible over $k(\sqrt{a})$, so $a$ is not a square in $k[u]/(\tilde{f}_i \circ g)$. 
Thus, by Theorem~\ref{theorem:BrConicBundle}\ref{part:order of Br-bar X}, 
the images of $(a,\tilde{f}_i \circ g)$ in $\overline{\Br}\, X$ for $i=0,\ldots,n$ generate $\overline{\Br}\, X$,
and the only $\F_2$-linear relation between them is that their sum is $0$.
In other words, omitting the $0$th generator, we find that the composition $(\Z/2\Z)^n \to \Br \tilde{X} \to \Br X \to \overline{\Br}\, X$ is an isomorphism; this is part~\ref{part:BaseChangeBr} of the theorem.

Parts \ref{part:S_i} and~\ref{part:constant} of the theorem concern the composition
\[
    X(k_w) \To \tilde{X}(k_w) \stackrel{\phi_w}\To \widehat{(\Z/2\Z)^n}
\]
for places $w$.
Condition~\ref{part:v_i} on \(g\) implies~\ref{part:S_i}. 
For $w \notin \{v_1,\ldots,v_r\}$, we already noted that \(\phi_w\) is $0$,
and condition~\ref{part:w} on \(g\) implies $X(k_w) \ne \emptyset$,
so \ref{part:constant} holds.
\qed

\section{An explicit construction over \texorpdfstring{\(\Q\)}{Q}} 
\label{sec:explicit}

In this section we give an explicit construction of a conic bundle over \(\Q\) with the desired properties of Theorem~\ref{thm:Main}.  This example is constructed using a base change argument reminiscent of the proof of Theorem~\ref{thm:MainBaseChangeThm}; see Remark~\ref{rem:base_change_comparison} for a comparison between these two approaches.

Let \(n \geq 2\) be a positive integer and let \(q\) be a prime number that is larger than \(n\).
By Dirichlet's theorem on primes in arithmetic progressions, there is a prime \(p\) that is congruent to \(1\) modulo \(8\), congruent to a non-square modulo \(q\), congruent to a square for all odd primes \(\ell\leq (q-1)(n-1)\) different from \(q\), and large enough that the conclusion of Proposition~\ref{P:Weil bound} holds when applied to polynomials of degree \(n+1\).

Define \(\tilde{f}_{0}(u) \colonequals q u + 4n\).  For \(i= 1, \ldots, n\), define \(\tilde{f}_i(u) \colonequals u + 4(n-i)\).  
By Proposition~\ref{P:Weil bound}, the image of $(\tilde{f}_0,\ldots,\tilde{f}_n) \colon \F_p \to \F_p^{n+1}$ meets every coset of $(\F_p^{\times 2})^{n+1}$ in $(\F_p^\times)^{n+1}$.
Let \(\mathcal{E}\) be the set of nonidentity elements $(\eps_0,\ldots,\eps_n) \in (\F_p^{\times}/\F_p^{\times2})^{n+1}\) with \(\prod \eps_i = 1\).
Choose a function $\psi \colon \A^1(\F_p) \to \F_p$ such that the composition
    \begin{equation}
    \label{equation:defined and image E}
        \A^1(\F_p) \stackrel{\psi}\To \F_p \xrightarrow{(\tilde{f}_0,\ldots,\tilde{f}_n)} (\F_p)^{n+1} \dashrightarrow (\F_p^\times/\F_p^{\times 2})^{n+1}
    \end{equation}
is defined and has image $\calE$.
By Lemma~\ref{lem:PolynomialInterpolation}, there exists a monic polynomial \(h \in \F_p[u]\) of degree \(p+1\) that on $\A^1(\F_p)$ agrees with $\psi$.
Fix a monic integral polynomial \(g \in \Z[u]\) of degree \(p+1\) such that
\[
    g\equiv h \pmod p, \quad g \equiv u^{p+1} - u^{p+2 -q} + 4\pmod{q}, \quad\textup{and}\quad g\textup{ is Eisenstein at }2.
\]
Let \(f_i \colonequals \tilde{f}_i \circ g\) and \(f = \prod_{i=0}^{n} f_i\); these are in $\Z[u]$.

\begin{lemma} \label{lem: propertiesoff} Let $p$, $q$, $n$, $f_i$, and $f$ be defined as above.
    \begin{enumerate}[font=\normalfont]
        \item\label{part:irred} 
        Each \(f_i\) is irreducible of even degree.  Its leading coefficient is $q$ if $i=0$, and $1$ if $i>0$.
        \item\label{part:q} Each $f_i$ maps $\A^1(\Z_q)$ into $\Z_q^\times$.
        \item\label{part:p} Each $f_i$ maps $\A^1(\Z_p)$ into $\Z_p^\times$, and the composition
        \begin{equation}\label{equation:image E}
           \A^1(\Z_p) \xrightarrow{(f_0,\ldots,f_n)} (\Z_p^\times)^{n+1} \surjects (\F_p^\times/\F_p^{\times 2})^{n+1}
        \end{equation}
        has image $\calE$.
        \item\label{part:f(c) for c in Z_p}
        The polynomial $f$ maps $\A^1(\Z_p)$ into $\Z_p^{\times 2}$.
        \item\label{part:ell} If \(\ell > (q-1)(n-1)\) is prime and \(c \in \A^1(\Z_\ell)\), then at most one of $f_0(c),\ldots,f_n(c)$ is not an \(\ell\)-adic unit.
    \end{enumerate}
\end{lemma}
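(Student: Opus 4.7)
The plan is to treat each of the five parts essentially independently, since they concern different places: $2$ for (1), $q$ for (2), $p$ for (3)--(4), and a general prime $\ell$ for (5). The two genuinely delicate steps will be a Newton polygon computation in the $i=0$ case of (1), and a sharp arithmetic inequality in (5); the rest is routine congruence calculation.

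For part (1), I plan to apply Lemma~\ref{lem:LocalIrred} at $v=2$. Each $\tilde{f}_i$ is linear, so automatically irreducible over $\Q$ with a root $\theta_i \in \Q_2$: $\theta_i = -4(n-i)$ for $i\geq 1$ and $\theta_0 = -4n/q$. For $i \geq 1$, the divisibility $4 \mid \theta_i$ combined with the Eisenstein hypothesis on $g$ shows that $g - \theta_i$ is Eisenstein at $2$, hence irreducible over $\Q_2$; the lemma then gives irreducibility of $f_i$ over $\Q$ (and separability is free in characteristic $0$). For $i = 0$, since $q \in \Z_2^\times$, irreducibility of $g-\theta_0$ over $\Q_2$ is equivalent to irreducibility of $f_0 = qg + 4n$; using the Eisenstein $2$-adic valuations of the coefficients of $g$, the polynomial $f_0$ has leading coefficient $q$ of $v_2=0$, middle coefficients of $v_2 \geq 1$, and constant term of $v_2=1$. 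Its Newton polygon is the single segment from $(0,1)$ to $(p+1,0)$ of slope $-1/(p+1)$, and $\gcd(1,p+1)=1$ forces irreducibility over $\Q_2$. The leading coefficients, degrees, and parities are immediate from $\deg g = p+1$ with $p$ odd.

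For parts (2), (3), and (4), I plan direct congruence calculations. In (2), the congruence $g \equiv u^{p+1} - u^{p+2-q} + 4 \pmod{q}$ combined with Fermat's little theorem yields $g(\Z_q) \subset 4 + q\Z_q$; substituting into each $\tilde{f}_i$ produces a residue mod $q$ that is a nonzero multiple of $4$, a unit because $q$ is odd and $q > n$. In (3), the relation $g \equiv h \pmod p$ together with $h(\bar c) = \psi(\bar c)$ on $\F_p$ gives $f_i(c) \bmod p = \tilde{f}_i(\psi(\bar c))$, which is a unit and whose $(n+1)$-tuple runs over $\mathcal{E}$ by the defining property of $\psi$. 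Part (4) is then immediate: elements of $\mathcal{E}$ are tuples whose component-wise product is trivial in $\F_p^\times/\F_p^{\times 2}$, so $f(c)$ is a nonzero square mod $p$, and Hensel's lemma ($p$ odd) lifts this to a square in $\Z_p^\times$.

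For part (5), I plan to argue by contradiction. Suppose $f_i(c)$ and $f_j(c)$ are both non-units in $\Z_\ell$ with $i \neq j$; the case $\ell = q$ is excluded by (2), so $q \in \Z_\ell^\times$ and $\theta_0$ reduces unambiguously mod $\ell$. The hypothesis then forces the distinct roots $\theta_i, \theta_j$ to coincide mod $\ell$. When both indices lie in $\{1,\ldots,n\}$, the difference $4(j-i)$ is a nonzero integer of absolute value at most $4(n-1)$, but $\ell$ is odd and $\ell > n-1$, so $\ell \nmid 4(j-i)$, a contradiction. When $i = 0$ and $j \geq 1$, clearing $q$ reduces the problem to $\ell \mid q(n-j) - n$; a quick case check over $j \in \{1,\ldots,n\}$ shows this integer is nonzero of absolute value at most $\max(n, q(n-1) - n)$, and the identity $q(n-1) - n = (q-1)(n-1) - 1$ (together with $\ell > n$, which follows from $(q-1)(n-1) \geq n$ for $q \geq 3$, $n \geq 2$) shows both bounds are strictly less than $\ell$, giving a contradiction. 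The main obstacle is this sharp inequality, which is exactly what calibrates the hypothesis $\ell > (q-1)(n-1)$.
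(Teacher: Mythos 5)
Your proposal is correct and follows essentially the same route as the paper: congruence computations at $q$ and $p$ via the prescribed reductions of $g$, Hensel for (4), and the same pair of integer inequalities calibrated by $\ell>(q-1)(n-1)$ for (5). For part (1) you are in fact more careful than the paper's one-line "$g$ Eisenstein at $2$, hence $f_i$ irreducible": your observation that each $f_i=\tilde f_i\circ g$ is itself Eisenstein at $2$ up to a unit leading coefficient (checked via the Newton polygon for $i=0$) is exactly the justification that remark needs.
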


\begin{proof} \hfill

  \textbf{\eqref{part:irred}:} Each $f_i$ has degree $p+1$.
  Since $g$ is Eisenstein at $2$, it is irreducible, so $f_i$ is irreducible too.
  Since $g$ is monic, the leading coefficient of $f_i \colonequals \tilde{f}_i \circ g$ equals that of $\tilde{f}_i$.
  
   \textbf{\eqref{part:q}:} For any \(c \in \A^1(\Z_{q})\), we have \(g(c)\equiv 4\pmod q\), so 
   $f_0(c) \equiv \tilde{f}_0(4) \equiv 4n \not\equiv 0 \pmod{q}$
   and $f_i(c) \equiv \tilde{f}_i(4) \equiv 4(n-i+1) \not\equiv 0\pmod{q}$ for $i=1,\ldots,n$,
   since \(q > n\) by assumption.
   
    \textbf{\eqref{part:p}:} 
    The composition is same as the composition
    \[
    \A^1(\Z_p) \surjects \A^1(\F_p) \xrightarrow{g=h=\psi} \F_p \xrightarrow{(\tilde{f}_0,\ldots,\tilde{f}_n)} (\F_p)^{n+1} \dashrightarrow (\F_p^\times/\F_p^{\times 2})^{n+1},
    \]
    which is a surjection followed by \ref{equation:defined and image E}, so its image is $\calE$.

    \textbf{\eqref{part:f(c) for c in Z_p}:} 
    The composition
    \[
        \A^1(\Z_p) \stackrel{f}\To \Z_p^\times \surjects \F_p^\times/\F_p^{\times 2}
    \]
    equals the composition of \ref{equation:image E} with the product map to $\F_p^\times/\F_p^{\times 2}$,
    which sends $\calE$ to $1$.
    Thus $f$ maps $\A^1(\Z_p)$ to $\ker(\Z_p^\times \surjects \F_p^\times/\F_p^{\times 2})$, which equals $\Z_p^{\times 2}$ by Hensel's lemma.
    
    \textbf{\eqref{part:ell}:} Suppose that there exist $i < j$ such that $f_i(c)$ and $f_j(c)$ are not $\ell$-adic units. Let \(c'\colonequals g(c)\); then $\tilde{f}_i(c') \equiv \tilde{f}_j(c') \equiv 0 \pmod{\ell}$.
    If \(i=0\), then the congruences 
    \begin{align*}
    qc'+4n \equiv\tilde{f}_0(c') &\equiv 0 \pmod{\ell}\\
    c' + 4(n-j) \equiv \tilde{f}_j(c') &\equiv 0 \pmod{\ell}
    \end{align*}
    force $q(n-j) \equiv n \pmod{\ell}$; also, $0 < |q(n-j) - n| \leq (q-1)(n-1)$, so $\ell \leq (q-1)(n-1)$.  
    If $i\ne 0$, then the congruence 
    \[
    c' + 4(n-i)\equiv\tilde{f}_i(c') \equiv 0\equiv \tilde{f}_j(c') \equiv c' + 4(n-j)\pmod{\ell}
    \]
    implies $i \equiv j \pmod{\ell}$; also, $0 < |i-j| < n$, so $\ell < n \le (q-1)(n-1)$.
    \end{proof}

The remainder of this section is devoted to proving that \(X = X_{p, f}\) satisfies the conditions of Theorem~\ref{thm:Main}.
\begin{lemma}\label{lem:BrExplicit}
    The Brauer classes $\alpha_i \colonequals (p, f_i)$ for $i=1,\ldots,n$ generate an elementary abelian \(2\)-subgroup of \(\Br X\) that has order \(2^n\) and generates \(\overline{\Br}\, X\).
\end{lemma}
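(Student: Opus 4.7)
The plan is to apply Theorem~\ref{theorem:BrConicBundle}\ref{part:order of Br-bar X} to \(X = X_{p,f}\) with \(k = \Q\), \(a = p\), and the irreducible factorization \(f = f_0 f_1 \cdots f_n\), and then to pass from its conclusion about \(\overline{\Br}\, X\) to the stated claim about \(\Br X\). The first hypothesis---that each \(f_i\) is irreducible of even degree---is exactly Lemma~\ref{lem: propertiesoff}\ref{part:irred}. Once the hypotheses are verified, Theorem~\ref{theorem:BrConicBundle}\ref{part:order of Br-bar X} yields \(\overline{\Br}\, X \cong (\Z/2\Z)^n\), with the \(n+1\) classes \(\overline{(p,f_i)}\) satisfying the unique \(\F_2\)-linear relation \(\sum_{i=0}^{n}\overline{(p,f_i)} = 0\). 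Eliminating \(\overline{(p,f_0)}\) via this relation, the images \(\overline{\alpha_1},\ldots,\overline{\alpha_n}\) form an \(\F_2\)-basis of \(\overline{\Br}\, X\). Since each \(\alpha_i\) is \(2\)-torsion (as the class of a quaternion algebra), the subgroup \(\langle\alpha_1,\ldots,\alpha_n\rangle \subset \Br X\) is elementary abelian of order at most \(2^n\); but its image in \(\overline{\Br}\, X\) has order exactly \(2^n\), so \(\langle\alpha_1,\ldots,\alpha_n\rangle\) itself has order exactly \(2^n\) and maps isomorphically onto \(\overline{\Br}\, X\).

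The remaining hypothesis is that \(p\) is not a square in \(K_i \colonequals \Q[u]/(f_i(u))\) for each \(i\); equivalently, \(f_i\) remains irreducible over \(\Q(\sqrt{p})\). To prove this, I will invoke Lemma~\ref{lem:LocalIrred} over the base field \(\Q(\sqrt{p})\) with \(h = \tilde{f}_i\) (automatically irreducible, being linear), so it suffices to exhibit for each \(i\) a place \(v\) of \(\Q(\sqrt{p})\) such that \(g - \theta_i\) is irreducible over \(\Q(\sqrt{p})_v\), where \(\theta_i \in \Q\) is the unique root of \(\tilde{f}_i\). For \(i \geq 1\) take \(v\) above \(2\): since \(p \equiv 1 \pmod{8}\), \(\sqrt{p} \in \Q_2\), so \(\Q(\sqrt{p})_v = \Q_2\); and \(g(u) + 4(n-i)\) is Eisenstein at \(2\) because its non-leading, non-constant coefficients are those of the Eisenstein polynomial \(g\), while its constant term has \(2\)-adic valuation exactly \(1\) (\(v_2(g(0)) = 1\) and \(v_2(4(n-i)) \geq 2\)). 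For \(i = 0\) take \(v\) above \(q\): since \(p\) is a non-square modulo \(q\), \(\Q(\sqrt{p})_v = \Q_{q^2}\) is the unramified quadratic extension of \(\Q_q\), and the Newton polygon of \(g(u) + 4n/q\) over \(\Q_{q^2}\) has a single segment of slope \(1/(p+1)\) (the leading coefficient has \(q\)-adic valuation \(0\), intermediate coefficients have valuation \(\geq 0\), and the constant term has valuation \(-1\) since \(v_q(4n) = 0\) because \(q > n\), while \(v_q(g(0)) \geq 0\)). Thus every root \(\alpha\) satisfies \(v_q(\alpha) = -1/(p+1)\), forcing \(\Q_{q^2}(\alpha)/\Q_{q^2}\) to be totally ramified of degree divisible by \(p+1\); since the degree is at most \(\deg(g - \theta_0) = p+1\), equality holds and \(g + 4n/q\) is irreducible over \(\Q_{q^2}\). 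The main obstacle is this \(i = 0\) case: \(\tilde{f}_0\) is not monic and \(f_0\) reduces to a nonzero constant modulo \(q\), so the Eisenstein argument that handles \(i \geq 1\) is unavailable, and a Newton polygon argument over \(\Q_{q^2}\) is needed instead.
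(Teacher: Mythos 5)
Your proof is correct, and its skeleton is the same as the paper's: apply Theorem~\ref{theorem:BrConicBundle}\ref{part:order of Br-bar X} to the factorization $f = f_0\cdots f_n$, which requires checking that each $f_i$ is irreducible of even degree and that $p$ is not a square in $\Q[u]/(f_i)$, and then lift the resulting basis of $\overline{\Br}\,X$ to an elementary abelian $2$-subgroup of $\Br X$ of order $2^n$ (a step you spell out more carefully than the paper does). The only genuine divergence is in how the non-square condition is verified. The paper's proof is a one-liner: \emph{every} $f_i$, including $f_0$, is Eisenstein at $2$ --- the Eisenstein criterion only needs the leading coefficient to be a $2$-adic unit, and $q$ is odd, while $v_2(qa_0+4n)=v_2(a_0+4(n-i))=1$ --- so $\Q[u]/(f_i)$ is totally ramified at $2$, whereas $2$ is unramified (indeed split) in $\Q(\sqrt{p})$. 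Your treatment of $i\ge 1$ is really the same observation in disguise, since $g-\theta_i$ \emph{is} $f_i$ there. For $i=0$ your Newton-polygon argument at the inert prime $q$ is valid (the constant term of $g+4n/q$ has valuation $-1$ because $q>n$, giving a single segment of slope $1/(p+1)$ with denominator equal to the degree), but the detour is unnecessary: $g+4n/q$ is also Eisenstein at $2$, since $v_2(4n/q)=v_2(4n)\ge 2$ and $v_2(a_0)=1$, so the place above $2$ works for $i=0$ exactly as for $i\ge 1$. So your "main obstacle" is not actually an obstacle; the cost of missing this is only length, not correctness.
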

\begin{proof}
     Since \(2\) is unramified in \(\Q(\sqrt{p})\), while \(f_i\) is Eisenstein at $2$, 
     the field $\Q(\sqrt{p})$ does not embed in $\Q[u]/(f_i(u))$.
     The result now follows from Theorem~\ref{theorem:BrConicBundle}\ref{part:order of Br-bar X}.
\end{proof}

\begin{thm} \label{thm: explicitconstruction}
    Consider the conic bundle \(X = X_{p, f}\) and let \(B \colonequals \langle \alpha_1,\ldots,\alpha_n \rangle \subset \Br X\) be the subgroup generated by the classes $\alpha_i$ described in Lemma~\ref{lem:BrExplicit}.  For each place \(v\), let \(S_v\) denote the image of \(\phi_v\colon X(\Q_v) \to \hat{B}\). Then 
    \begin{enumerate}[font=\normalfont]
        \item \label{part:LocalSolubility} 
            \(X(\A_{\Q})\neq\emptyset\),
        \item \label{part:ConstantEvaluation}
            For all places \(v\neq p\), we have \(S_v = \{0\}\), and
        \item \label{part:EvaluationAtp}
            \(S_p = \hat{B} \setminus \{0\}\).
    \end{enumerate}
    In particular, \(X(\A_{\Q})^{\Br} = \emptyset\) and for all subgroups \(B'\subsetneq \Br X\) that do not generate \(\overline{\Br}\, X\), \(X(\A_{\Q})^{B'}\neq\emptyset\).
\end{thm}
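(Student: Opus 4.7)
The plan is to compute the image $S_v$ of $\phi_v\colon X(\Q_v)\to\hat{B}$ at each place $v$ of $\Q$ and then invoke Corollary~\ref{cor:2torsion}. The central input is the formula $\alpha_i(P_v) = (p, f_i(c))$ in $\Br \Q_v$, valid whenever $P_v$ lies over a smooth fiber at $c = u(\pi(P_v)) \in \A^1(\Q_v)$. Over the point at infinity or over the zeros of some $f_i$, I instead use two tools: (a)~Laurent expansion, exploiting that each $f_i$ has even degree $p+1$ by Lemma~\ref{lem: propertiesoff}(1); and (b) the relation $\sum_{i=0}^n \alpha_i = 0$ in $\Br X$, coming from $(p, f) = 0$ in $\Br \kk(X)$ because $X$ is split by $\Q(\sqrt p)$.

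For part~(2), I split into cases based on whether $p$ is a square in $\Q_v$. The congruence conditions defining $p$ ensure $p \in \Q_v^{\times 2}$ for $v \in \{\infty, 2\}$ and for every odd prime $\ell \leq (q-1)(n-1)$ with $\ell \neq q$; at these places $(p, \cdot)$ vanishes on $\Br\Q_v$ and so $S_v = \{0\}$ trivially. The remaining places are $v = q$ and odd primes $\ell > (q-1)(n-1)$, at which $\Q_v(\sqrt p)/\Q_v$ is unramified quadratic and $(p,x) = 0$ if and only if $v_v(x)$ is even. For $c \in \Z_v$, Lemma~\ref{lem: propertiesoff}(2) (at $v=q$) or Lemma~\ref{lem: propertiesoff}(5) (at $\ell > (q-1)(n-1)$) implies that at most one $f_i(c)$ is a non-unit; the four possibilities (no non-unit; exceptional $v_v(f_{i_0}(c))$ even; $v_v(f(c))$ odd, so no $\Q_v$-point exists at all; or $f_{i_0}(c) = 0$, handled by the sum relation as $\alpha_{i_0}(P_v) = -\sum_{j\neq i_0} \alpha_j(P_v) = 0$) each give $\phi_v(P_v) = 0$. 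For $c \in \Q_v$ with $v_v(c) < 0$, the leading term $c^{p+1}$ of $f_i$ forces $v_v(f_i(c)) = (p+1)v_v(c)$, which is even; at $\infty$, the Laurent expansion of $f_i/u^{p+1}$ gives value $1$ for $i \geq 1$, so $\alpha_i$ vanishes there as well.

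Part~(3) is the principal obstacle. First I show every $P_p \in X(\Q_p)$ lies over some $c \in \Z_p$. The fiber at $\infty$ is $w_0^2 - p w_1^2 - q w_2^2 = 0$: since $p \equiv 1 \pmod 4$ and $p$ is a nonsquare mod $q$, quadratic reciprocity gives that $q$ is a nonsquare mod $p$, so $(p,q) \neq 0$ in $\Br\Q_p$ and the fiber has no $\Q_p$-point. For $c \in \Q_p$ with $v_p(c) < 0$, a leading-term analysis yields $(p, f(c)) = (p, q) \neq 0$ (the even power of the uniformizer $p$ and the square unit contributions both die in $(p, \cdot)$), so again no $\Q_p$-point. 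Over each $c \in \Z_p$, Lemma~\ref{lem: propertiesoff}(4) gives $f(c) \in \Z_p^{\times 2}$, producing the explicit $\Q_p$-point $(\sqrt{f(c)}, 0, 1)$ on the fiber. The evaluation then sends $\alpha_i$ to $0$ or $1/2$ according as $f_i(c) \bmod p$ is a square in $\F_p^\times$, and Lemma~\ref{lem: propertiesoff}(3) identifies the image of the map $c \mapsto (f_0(c), \ldots, f_n(c)) \bmod \F_p^{\times 2}$ as $\calE = \{(\eps_0, \ldots, \eps_n) \neq 1 : \prod \eps_i = 1\}$. Projection onto the last $n$ coordinates is a bijection $\calE \to \hat{B} \setminus \{0\}$ (the product relation being automatic from Lemma~\ref{lem: propertiesoff}(4)), so $S_p = \hat{B} \setminus \{0\}$. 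Combining yields $S = \hat{B} \setminus \{0\}$, whence part~(1) is immediate (each $S_v$ is nonempty), and Corollary~\ref{cor:2torsion} delivers the remaining ``In particular'' statement.
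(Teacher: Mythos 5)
Your proposal follows essentially the same route as the paper: evaluate $\alpha_i$ fiberwise via $(p,f_i(c))$, use the relation $\sum_{i=0}^n \alpha_i = 0$ on degenerate or exceptional fibers, split part~(2) according to whether $p\in\Q_v^{\times 2}$, and for part~(3) show $\pi(X(\Q_p))=\A^1(\Z_p)$ and push Lemma~\ref{lem: propertiesoff}\eqref{part:p} through the forgetful bijection $\calE\to\hat B\setminus\{0\}$. All of that matches the paper's proof and is correct.

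The one genuine gap is part~\eqref{part:LocalSolubility}. Your closing sentence derives it from ``$S=\hat B\setminus\{0\}$, whence each $S_v$ is nonempty,'' but this is circular: $S=\sum_v S_v$ is empty unless every $S_v$ is already known to be nonempty, and your argument for part~\eqref{part:ConstantEvaluation} only establishes the inclusion $S_v\subseteq\{0\}$ (it quantifies over points $P_v\in X(\Q_v)$ without producing any). You do construct $\Q_p$-points explicitly, but you never exhibit a $\Q_v$-point for $v\ne p$. This matters most at $v=q$, where the fiber at infinity has \emph{no} $\Q_q$-point (the same computation $(p,q)_q\ne 0$ that you use at $p$ applies at $q$ by reciprocity), so one must pass to a finite fiber: for $c\in\A^1(\Z_q)$, Lemma~\ref{lem: propertiesoff}\eqref{part:q} gives $f(c)\in\Z_q^\times$, so $X_c$ is a smooth conic over $\Z_q$ and has a point by Hensel. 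At places where $p\in\Q_v^{\times 2}$ the generic fiber has the rational point $[\sqrt p:1:0]$, and at the remaining odd $\ell\ne p,q$ the fiber at infinity works since $(p,q)_\ell=0$. Adding these observations (which is exactly the content of the paper's proof of part~\eqref{part:LocalSolubility}) closes the gap.
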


\begin{proof}\hfill

    \textbf{\eqref{part:LocalSolubility}:} By construction, \(f\) has even degree and has leading coefficient \(q\).  Thus the fiber $X_\infty$ is the conic \(y^2 - pz^2 = qw^2\).  Since \(p\equiv 1 \pmod 8\) and $p \bmod q$ is not a square, $X_\infty$ has a $\Q_v$-point if and only if $v \ne p,q$.

    Let $c \in \A^1(\Z_p)$; by Lemma~\ref{lem: propertiesoff}\eqref{part:f(c) for c in Z_p}, $f(c) \in \Z_p^{\times 2}$, so the fiber $X_c$ has a $\Q_p$-point.
    
    Let $c' \in \A^1(\Z_q)$; by Lemma~\ref{lem: propertiesoff}\eqref{part:q}, $f(c') \in \Z_q^\times$, so $X_{c'}$ has good reduction at $q$, so $X_{c'}$ has a $\Q_q$-point.

    \textbf{\eqref{part:ConstantEvaluation}:} 
    By \eqref{part:LocalSolubility}, each $S_v$ is nonempty.
    If $p \in \Q_v^{\times 2}$, then all $\alpha_i$ are $0$, so $S_v \subset \{0\}$, so $S_v=\{0\}$.
    Therefore it remains to prove $S_v \subset \{0\}$ for $v=\ell \ne p$, where $\ell=q$ or $\ell > (q-1)(n-1)$.
    That is, for such $\ell$, and for any $i>0$ and any $P \in X(\Q_\ell)$, we need $P^* \alpha_i = 0$ in $\Br \Q_\ell$.
    Let $c=\pi(P)$; if $c \ne \infty$, the condition can be written as $(p,f_i(c)) = 0$ in $\Br \Q_\ell$.

    \emph{Case 1: $c=\infty$.} By the proof of \eqref{part:LocalSolubility}, $\ell \ne p,q$.
By Lemma~\ref{lem: propertiesoff}\eqref{part:irred}, there is an integer \(n\) such that $f_i/t^{2n}$ evaluates to \(1\) at \(\infty\), so $P^* \alpha = 0$ for any $P \in X_\infty(\Q_\ell)$.

    \emph{Case 2: $c \in \A^1(\Q_\ell) \setminus \A^1(\Z_\ell)$.}
    By Lemma~\ref{lem: propertiesoff}\eqref{part:irred}, $v_\ell(f_i(c))$ is even.
    Since $\ell \ne 2,p$, we have $(p,f_i(c))=0$ in $\Br \Q_\ell$.

    \emph{Case 3: $c \in \A^1(\Z_\ell)$.}
    Since $f$ is in the image of the norm from $\Q(X)(\sqrt{p})/\Q(X)$, we have $(p,f)=0$ in $\Br X$.
    Thus $\sum_{i=0}^n (p,f_i(c))=0$.
    If $\ell=q$, Lemma~\ref{lem: propertiesoff}\eqref{part:q} implies $f_i(c) \in \Z_q^\times$, so $(p,f_i(c))=0$ for all $i$.
    If $\ell \ne p,q$ with $\ell>(q-1)(n-1)$, then by Lemma~\ref{lem: propertiesoff}(\ref{part:ell}), there exists $j$ such that for all $i \ne j$, we have $f_i(c) \in \Z_\ell^\times$, so $(p,f_i(c))=0$; then $\sum_{i=0}^n (p,f_i(c))=0$ forces $(p,f_j(c))=0$ too.

    \textbf{\eqref{part:EvaluationAtp}:}    
    We claim that \(\pi(X(\Q_p)) = \A^1(\Z_p)\). 
    By the proof of \eqref{part:LocalSolubility}, $X_\infty(\Q_p)=\emptyset$; that is, $\infty \notin \pi(X(\Q_p))$.
    For $c \in \A^1(\Q_p) \setminus \A^1(\Z_p)$, Lemma~\ref{lem: propertiesoff}\eqref{part:irred} implies $f(c) \in q \Q_p^{\times 2}$, and $\left( \frac{q}{p} \right) = \left( \frac{p}{q} \right) = -1$, so the conic $X_c \colon y^2-pz^2 = f(c) w^2$ has no $\Q_p$-point.
    For $c \in \A^1(\Z_p)$, the proof of \eqref{part:LocalSolubility} showed that $X_c$ has a $\Q_p$-point.
    This proves the claim.

   Since $(p,-) \colon \Z_p^\times \to \Br \Q_p = \Q/\Z$ equals the composition $\Z_p^\times  \surjects \F_p^\times/\F_p^{\times 2} \isom \frac{1}{2}\Z/\Z \injects \Q/\Z$,  the map $\phi_v$ equals the composition
   \[
      X(\Q_p) \stackrel{\pi}\surjects \A^1(\Z_p) \stackrel{\textup{\eqref{equation:image E}}}\To (\F_p^\times/\F_p^{\times 2})^{n+1} \xrightarrow{\textup{forget $0$th coordinate}} (\F_p^\times/\F_p^{\times 2})^n \isom (\tfrac{1}{2}\Z/\Z)^n \isom \hat{B}
   \]
   sending $P$ to $(P^* (p,f_i))_{1 \le i \le n} \in (\tfrac{1}{2}\Z/\Z)^n$ and then applying the isomorphism to $\hat{B}$ defined by our $\F_2$-basis of $B$.
   By Lemma~\ref{lem: propertiesoff}\eqref{part:p}, the image of $X(\Q_p)$ in $(\F_p^\times/\F_p^{\times 2})^{n+1}$ is $\calE$, which maps onto $(\F_p^\times/\F_p^{\times 2})^n \setminus \{(1,\ldots,1)\}$ and then $\hat{B} \setminus \{0\}$.
\end{proof}

\begin{remark} To construct an explicit $X_{p,f}$ as in Theorem~\ref{thm: explicitconstruction}, we must first find suitable primes $p$ and $q$. When $n=4$ we can take $q = 5$, in which case the smallest prime $p$ satisfying the conditions is $p = 1873$, and we may choose $\psi$ to have image
\[
        \{  3, 6, 7, 11, 15, 20, 22, 26, 29, 31, 33, 35, 41, 61, 195 \} \subset \F_{1873}. 
\]
All computations were done in \texttt{Magma}. 
\end{remark}

\begin{remark}\label{rem:base_change_comparison}
    
There is a strong analogy between the construction of the conic bundle \(X_{p, f}\) satisfying Theorem~\ref{thm: explicitconstruction} and the base change technique used in the proof of Theorem~\ref{thm:MainBaseChangeThm}.  Recall the notations of \(\tilde{f}_i, g, \psi, f\).  Let \(\tilde{f} = \prod_i \tilde{f}_i\).  Then the conic bundle \(X \colonequals X_{p, f}\) is the base change of \(\tilde{X} \colonequals X_{p, \tilde{f}}\) by the map \(g \colon \pp^1 \to \pp^1\).  Since the \(\tilde{f}_i\) have odd degree, the classes \(\tilde{\alpha}_i \colonequals (p, \tilde{f}_i)\), whose pullbacks \(g^*\tilde{\alpha}_i = (p, f_i)\) generate $\overline{\Br}\, X$, are ramified along \(\tilde{X}_\infty\). Moreover, using Theorem~\ref{theorem:BrConicBundle}, one sees that the 2-rank of \(\overline{\Br}\, X\) is larger than the 2-rank of \(\overline{\Br}\, \tilde{X}\), so we cannot hope to employ \emph{exactly} the same strategy as in the proof of Theorem~\ref{thm:MainBaseChangeThm}. Nevertheless, \(\tilde{\alpha}_i \in \Br(\tilde{X}_{\A^1})\) and we can evaluate \(\tilde{\alpha}_i\) at points in \(\tilde{X}_{\A^1}(\Z_\ell)\).  (This suffices since \(g\) is totally ramified of even degree over infinity.)  For primes \(\ell \neq p\) or \( q\), the same proof as in Theorem~\ref{thm: explicitconstruction}\eqref{part:ConstantEvaluation} shows that the image of the map \(\tilde{X}_{\A^1}(\Z_\ell) \to \widehat{(\Z/2\Z)^n} \) is \(\{0\}\). By our choice of \(p\) sufficiently large to satisfy Proposition~\ref{P:Weil bound}, the map \(\tilde{X}_{\A^1}(\Z_p) \to \widehat{(\Z/2\Z)^n} \) is surjective.  For \(q\), any $P \in \tilde{X}_{\A^1}(\Z_{q})$ with $\pi(P) \equiv 4 \pmod{q}$ 
is sent to 0 in \( \widehat{(\Z/2\Z)^n} \).  
The constraints on \(g\) modulo \(p\) and \(q\) amount to constraining the image of \(X_{\A^1}(\Z_{\ell})\) in \(\tilde{X}_{\A^1}(\Z_\ell)\) for \(\ell = p, q\) to force \(S_p = \widehat{(\Z/2\Z)^n} \setminus \{0\}\) and \(S_{q} = \{0\}\). 
\end{remark}
\begin{remark}
It is possible to use the ideas of this section to realize any combinatorial assignment as in Section \ref{sec:group}, and for a general global field of characteristic not $2$, not just for $\Q$. Since this is already achieved by the proof in Section \ref{S:base change}, we omit the details.     
\end{remark}

\appendix

\section{Sumsets excluding exactly one element}\label{sec:bounds}    

In this section we use a result of Rado to prove Theorem~\ref{thm:sharpboundnew}, which constrains the ways that a sumset can equal $\F_2^n \setminus \{v\}$ for a vector $v$.

    \begin{lemma}[\cite{Rado1942}*{Theorem 1}] \label{lem:subspacesnew}
  Let $V$ be a vector space over a field~$F$, let $I$ be a finite set, and let
  $S_i \subseteq V$ for $i \in I$ be subsets such that
  $\dim \Span \bigl(\bigcup_{j \in J} S_j\bigr) \ge \#J$ for each subset $J \subseteq I$.
  Then there exists a tuple in $\prod_{i \in I} S_i$ whose components are linearly independent.
\end{lemma}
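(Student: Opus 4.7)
The plan is to prove Lemma~\ref{lem:subspacesnew} by induction on $\#I$, in the spirit of the standard inductive proof of Hall's marriage theorem. The base case $\#I = 1$ is immediate: the hypothesis applied to $J = I$ forces $\dim \Span(S_i) \ge 1$, so $S_i$ contains a nonzero vector. For the inductive step, I would split into two cases according to whether the Hall-type inequality is tight on some proper nonempty subset.

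If some nonempty $J \subsetneq I$ achieves equality, $\dim \Span(\bigcup_{j \in J} S_j) = \#J$, then the induction hypothesis applied to the subproblem indexed by $J$ produces a linearly independent transversal $(s_j)_{j \in J}$, which by a dimension count spans $W \colonequals \Span \bigcup_{j \in J} S_j$. Passing to $V/W$ and writing $\bar S_i$ for the image of $S_i$ for $i \in I \setminus J$, the key computation
\[
\dim \Span \bigcup_{i \in J'} \bar S_i \;\ge\; \dim \Span \bigcup_{i \in J \cup J'} S_i - \dim W \;\ge\; (\#J + \#J') - \#J \;=\; \#J'
\]
verifies the hypothesis for $I \setminus J$ in $V/W$, and induction yields an independent transversal $(\bar s_i)_{i \in I \setminus J}$ in the quotient. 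Any lifts $s_i \in S_i$ of these, combined with the $(s_j)_{j \in J}$, form an independent transversal in $V$. Otherwise, every nonempty proper $J \subsetneq I$ satisfies the strict bound $\dim \Span(\bigcup_{j \in J} S_j) \ge \#J + 1$; pick any $i_0 \in I$ and any nonzero $s_{i_0} \in S_{i_0}$, and pass to $V/\langle s_{i_0}\rangle$. The quotient decreases each dimension by at most one, so the strict slack ensures the hypothesis still holds for the index set $I \setminus \{i_0\}$, and induction supplies an independent transversal in the quotient that lifts to complete $s_{i_0}$ to an independent transversal in $V$.

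The main delicate step will be the tight case: verifying that the Hall-type hypothesis descends to the quotient $V/W$ (the displayed inequality) and checking that an independent transversal in $V/W$ lifts to one in $V$ once adjoined to the already chosen basis of $W$. There is no issue arising from infinite $S_i$ or infinite-dimensional $V$, because the desired transversal consists of only $\#I$ vectors, so one may replace $V$ with the finite-dimensional span of suitable finite subsets of each $S_i$ from the outset; equivalently, Rado's theorem for finite $S_i$ inside a finite-dimensional $V$ implies the general case by a routine compactness argument.
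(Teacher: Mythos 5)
Your proof is correct. There is nothing in the paper to compare it against: the lemma is quoted verbatim from Rado's 1942 paper with no proof supplied, so your argument stands on its own. What you give is the standard deletion/contraction induction for Rado's theorem specialized to vector matroids, and both delicate points you flag do go through: in the tight case the chosen transversal of $J$ is a basis of $W = \Span\bigl(\bigcup_{j\in J}S_j\bigr)$ by the dimension count, the displayed inequality descends the Hall-type hypothesis to $V/W$ because $\Span\bigl(\bigcup_{i\in J'}S_i\bigr)+W=\Span\bigl(\bigcup_{i\in J\cup J'}S_i\bigr)$, and a relation among the combined lifts dies first in the quotient and then in $W$. In the slack case, every nonempty $J'\subseteq I\setminus\{i_0\}$ is a nonempty proper subset of $I$, so the $+1$ absorbs the loss of at most one dimension from quotienting by $\langle s_{i_0}\rangle$. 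One small remark: the closing reduction to finite $S_i$ and finite-dimensional $V$ is not needed at all, since the induction as written never uses finiteness of the sets or of $\dim V$; if you do want to keep it, the cleaner justification is to pick, for each of the finitely many $J\subseteq I$, a set of $\#J$ independent witnesses in $\bigcup_{j\in J}S_j$ and retain only those finitely many vectors in each $S_i$, rather than appealing to compactness.
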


\begin{prop}\label{prop:SumsetContainsSubspacenew}
Let $n \ge 1$.
Let $I$ be a finite set.
Suppose that $0 \in S_i \subseteq \F_2^n$ and $\#S_i \ge 2$ for each $i \in I$.
If each tuple of vectors in $\prod_{i \in I} S_i$ is linearly dependent, then there exists \(J \subseteq I\) such that  \(\sum_{j \in J} S_j\) is a nontrivial \emph{subspace} of~\(\F_{2}^n\).
\end{prop}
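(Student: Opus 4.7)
The plan is to apply the contrapositive of Rado's theorem (Lemma~\ref{lem:subspacesnew}) to the family $(S_i)_{i \in I}$ and then exploit minimality. Since every tuple in $\prod_{i \in I} S_i$ is linearly dependent, Rado's theorem yields some $J \subseteq I$ with $\dim\Span(\bigcup_{j \in J} S_j) < \#J$. I would pick such a $J$ of minimum cardinality and show that the subspace $V \colonequals \Span(\bigcup_{j \in J} S_j)$ coincides with the sumset $\sum_{j \in J} S_j$; this $V$ will be the desired nontrivial subspace.

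First I would check that $\#J \ge 2$: each $S_i$ contains a nonzero vector (as $0 \in S_i$ and $\#S_i \ge 2$), so a singleton $J$ would contradict $\dim\Span S_j < 1$. Setting $d \colonequals \dim V$, the minimality of $J$ forces $d = \#J - 1$: for any $j_0 \in J$ and $J' \colonequals J \setminus \{j_0\}$, the minimality gives $\dim\Span(\bigcup_{j \in J'} S_j) \ge \#J' = \#J - 1$, while this span also sits inside $V$, so it equals $V$ and $d = \#J - 1$.

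Next I would apply Rado's theorem to the subfamily $(S_j)_{j \in J'}$: its hypotheses are satisfied because every subset of $J'$ is a proper subset of $J$ and hence inherits the required dimension inequality from the minimality of $J$. This produces $v_j \in S_j$ for $j \in J'$ that are linearly independent, and since $\#J' = d$ and each $v_j \in V$, these vectors form a basis of $V$. To finish, given any $v \in V$, I would write $v = \sum_{j \in J'} c_j v_j$ with $c_j \in \F_2$; each $c_j v_j$ equals either $0$ or $v_j$, both of which lie in $S_j$, and padding with $0 \in S_{j_0}$ realizes $v$ as an element of $\sum_{j \in J} S_j$. Thus $\sum_{j \in J} S_j = V$, a subspace of dimension $d = \#J - 1 \ge 1$.

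I do not expect a serious obstacle here: the crux is simply combining Rado's theorem with minimality to produce a basis of $V$ drawn from the individual $S_j$'s, after which the reconstruction step is automatic because over $\F_2$ the only scalars are $0$ and $1$, both giving elements of $S_j$. The one place to be careful is verifying that Rado's hypothesis really does hold for the subfamily indexed by $J'$, which follows cleanly from our choice of $J$ as minimal.
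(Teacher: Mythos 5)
Your proof is correct. It rests on the same key lemma as the paper's (Rado's theorem, Lemma~\ref{lem:subspacesnew}) and ends with the same $\F_2$-specific observation that $\Span(v_j : j \in J') = \sum_{j}\{0,v_j\} \subseteq \sum_j S_j$, but the extremal argument in the middle is organized differently. The paper first fixes a tuple $(v_i)_{i\in I}$ of \emph{maximal} rank $m$, applies Rado's contrapositive only to an $(m+1)$-element set $I_{m+1}$ containing a basis-indexing set $I_m$, and then squeezes a chain of (in)equalities to show that the resulting $J$ satisfies $\Span(\bigcup_{j\in J} S_j) = \Span(v_j : j \in J)$; the spanning vectors are drawn from the pre-chosen maximal tuple. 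You instead apply Rado's contrapositive to all of $I$, take a violating set $J$ of \emph{minimal} cardinality, deduce $\dim V = \#J-1$ from minimality, and then invoke Rado a second time (in its direct form, on $J\setminus\{j_0\}$, whose hypotheses hold precisely by minimality of $J$) to manufacture a basis of $V$ from the individual $S_j$'s. Your version is slightly more modular and makes the role of Rado's hypothesis transparent, at the cost of a second invocation of the lemma; the paper's single application is bought by the more delicate bookkeeping with $I_m \subsetneq I_{m+1}$ and the maximal tuple. All the steps you flag as potentially delicate (nonemptiness of $J$, $\#J\ge 2$, and the verification of Rado's hypothesis for the subfamily) do go through exactly as you describe.
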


\begin{proof}
  Let $(v_i) \in \prod_{i \in I} S_i$ be such that $\dim \Span(v_i : i \in I)$
  is maximal, say equal to $m$. 
  Let $I_m \subseteq I$ be such that $(v_i)_{i \in I_m}$ is a basis of this span, so $\#I_m=m$.
  Since $(v_i)_{i \in I}$ is dependent, $I_m \ne I$.
  Let $I_{m+1}$ be a set of size $m+1$ such that $I_m \subsetneq I_{m+1} \subseteq I$.
  No tuple in $\prod_{i \in I_{m+1}} S_i$ is linearly independent,
  so by the contrapositive of Lemma~\ref{lem:subspacesnew} for $I_{m+1}$, there exists $J \subseteq I_{m+1}$ such that
  \[ \dim \Span(\bigcup_{j \in J} S_j) \le \#J - 1 . \]
  In particular, $J$ is nonempty. Now, 
  \begin{align*} 
  \#J - 1 \le \# (J \intersect I_m) & = \dim \Span(v_j : j \in J \intersect I_m)\\
  & \le \dim \Span(v_j : j \in J)
             \le \dim \Span(\bigcup_{j \in J} S_j) \le \#J - 1 , 
    \end{align*}
  which gives the final equality in 
  \[ \Span(v_j : j \in J) = \sum_{j \in J} \{0,v_j\}
                            \subseteq \sum_{j \in J} S_j
                          \subseteq \Span(\bigcup_{j \in J} S_j)
                          = \Span(v_j : j \in J), \]
  so $\sum_{j \in J} S_j$ is a subspace of~$\F_2^n$.
  Since $J$ is nonempty and $S_j$ contains a nonzero vector for each~$j$, this subspace is nontrivial.
\end{proof}

    \begin{thm}
    \label{thm:sharpboundnew}
    Let $n \ge 1$.
    Suppose that $S_i \subseteq \F_2^n$ and $\#S_i \ge 2$ for $i=1,\ldots,t$.
    \begin{enumerate}[label* = \normalfont(\alph*)]
    \item \label{I:sumset omits one vector}
    If $\sum S_i = \F_2^n \setminus \{v\}$ for some $v \in \F_2^n$, then $t \le n-1$.
    \item \label{I:example of sum S_i}
    Let $e_1,\ldots,e_n$ be the standard basis of $\F_2^n$.
    Let $v=\sum e_i$.
    Let $S_i=S \colonequals \{0,e_1,\ldots,e_n\}$ for $i=1,\ldots,n-1$.
    Then $\sum S_i = \F_2^n - \{v\}$.
    \end{enumerate}
    \end{thm}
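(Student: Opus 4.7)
\textbf{Proof plan for Theorem~\ref{thm:sharpboundnew}.}
My plan for part~\ref{I:sumset omits one vector} is to reduce to the setting of Proposition~\ref{prop:SumsetContainsSubspacenew} by translation. Choose $s_i \in S_i$ arbitrarily and replace each $S_i$ by $S_i - s_i$; the cardinalities are unchanged, each translated set contains $0$, and the sumset becomes $\F_2^n \setminus \{v - \sum_i s_i\}$, still of the same form. So I may assume $0 \in S_i$ for all $i$.

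Next I will apply Proposition~\ref{prop:SumsetContainsSubspacenew} with $I = \{1,\ldots,t\}$ to produce a linearly independent tuple. Suppose instead that every tuple in $\prod_i S_i$ is dependent; then the proposition gives a nonempty $J \subseteq I$ for which $W \colonequals \sum_{j \in J} S_j$ is a nontrivial subspace. Fixing arbitrary choices of $s_i \in S_i$ for $i \notin J$, the set $\bigl(\sum_{i \notin J} s_i\bigr) + W$ is a coset of $W$ contained in $\sum_{i=1}^t S_i$; varying the choices shows $\sum_{i=1}^t S_i$ is a union of cosets of $W$, so its cardinality is divisible by $|W| \ge 2$. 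This contradicts $|\F_2^n \setminus \{v\}| = 2^n - 1$ being odd. So an independent tuple $(v_1,\ldots,v_t) \in \prod_i S_i$ exists, giving $t \le n$. To exclude $t = n$, observe that then $v_1,\ldots,v_n$ would be a basis of $\F_2^n$ with $\{0,v_i\} \subseteq S_i$, so $\sum_{i=1}^n S_i \supseteq \sum_{i=1}^n \{0,v_i\} = \F_2^n$, contradicting the omission of $v$. Hence $t \le n - 1$.

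For part~\ref{I:example of sum S_i} I plan to verify the claim by direct combinatorics on the sum of $n-1$ copies of $S = \{0,e_1,\ldots,e_n\}$. Any sum $w_1 + \cdots + w_{n-1}$ with $w_j \in S$ equals $\sum_{i=1}^n (a_i \bmod 2)\,e_i$, where $a_i \colonequals \#\{j : w_j = e_i\}$; since there are only $n-1$ summands and $0$ contributes nothing, $\sum_i a_i \le n-1$. Reaching $v = e_1 + \cdots + e_n$ would require $a_i \ge 1$ for all $i$, forcing $\sum_i a_i \ge n$, so $v \notin \sum_{i=1}^{n-1} S$. Conversely, any $c = \sum_i c_i e_i \ne v$ has at most $n - 1$ nonzero coordinates, and setting $a_i \colonequals c_i$ and padding with zero summands realizes $c$ as such a sum, so $c \in \sum_{i=1}^{n-1} S$.

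The main obstacle is the step that upgrades the conclusion of Proposition~\ref{prop:SumsetContainsSubspacenew} into a contradiction: the argument crucially exploits that deleting a single vector from $\F_2^n$ produces a set whose cardinality $2^n - 1$ is coprime to the order of every nontrivial $\F_2$-subspace. The sharpening from $t \le n$ to $t \le n-1$ is easier, following from the observation that an independent $n$-tuple already fills all of $\F_2^n$.
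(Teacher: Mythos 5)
Your proof is correct and follows essentially the same route as the paper's: translate each $S_i$ to contain $0$, invoke Proposition~\ref{prop:SumsetContainsSubspacenew} together with the parity obstruction ($2^n-1$ is not divisible by the order of a nontrivial subspace) to extract an independent tuple, and rule out $t=n$ because a basis would make the sumset all of $\F_2^n$. Part~\ref{I:example of sum S_i} is likewise the paper's direct count of nonzero coordinates, just written out in more detail.
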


    \begin{remark}
    The example in \ref{I:example of sum S_i} shows that the inequality in \ref{I:sumset omits one vector} is sharp.
    \end{remark}

    \begin{remark}
    By replacing $S_1$ in \ref{I:example of sum S_i} by $S_1+v$, we obtain an example with $\sum S_i = \F_2^n - \{0\}$.
    \end{remark}

\begin{proof}[Proof of Theorem~\ref{thm:sharpboundnew}]
\hfill
\begin{enumerate}[label* = \normalfont(\alph*)]
\item For each $i$, translate $S_i$ to assume that $0 \in S_i$.
Suppose that $t \ge n$.
Then any tuple in $\prod_{i=1}^t S_i$ is linearly dependent,
since otherwise it would be a basis, contradicting $\sum S_i = \F_2^n \setminus \{v\}$.
By Proposition~\ref{prop:SumsetContainsSubspacenew}, $\sum S_i$ is a union of cosets of a nontrivial subspace of $\F_2^n$, again contradicting $\sum S_i = \F_2^n \setminus \{v\}$.
Thus $t \le n-1$.
\item
Each $S_i$ is the set of vectors with at most one nonzero coordinate,
so $\sum_{i=1}^{n-1} S_i$ is the set of vectors with at most $n-1$ nonzero coordinates.\qedhere
\end{enumerate}
\end{proof}


\begin{bibdiv}
    \begin{biblist}

\bib{BBMPV}{article}{
   author={Balestrieri, Francesca},
   author={Berg, Jennifer},
   author={Manes, Michelle},
   author={Park, Jennifer},
   author={Viray, Bianca},
   title={Insufficiency of the Brauer--Manin obstruction for rational points
   on Enriques surfaces},
   conference={
      title={Directions in number theory},
   },
   book={
      series={Assoc. Women Math. Ser.},
      volume={3},
      publisher={Springer, [Cham]},
   },
   date={2016},
   pages={1--31},
   review={\MR{3596575}},
   doi={10.1007/978-3-319-30976-7\_1},
}

\bib{CTPS}{article}{
   author={Colliot-Th\'{e}l\`ene, Jean-Louis},
   author={P\'{a}l, Ambrus},
   author={Skorobogatov, Alexei N.},
   title={Pathologies of the Brauer--Manin obstruction},
   journal={Math. Z.},
   volume={282},
   date={2016},
   number={3-4},
   pages={799--817},
   issn={0025-5874},
   review={\MR{3473644}},
   doi={10.1007/s00209-015-1565-x},
}

\bib{CTP}{article}{
AUTHOR = {Colliot-Th\'{e}l\`ene, Jean-Louis},
AUTHOR = {Poonen, Bjorn},
     TITLE = {Algebraic families of nonzero elements of {S}hafarevich-{T}ate groups},
   JOURNAL = {J. Amer. Math. Soc.},
    VOLUME = {13},
      YEAR = {2000},
    NUMBER = {1},
     PAGES = {83--99},
      ISSN = {0894-0347},
  MRNUMBER = {1697093}, 
         DOI = {10.1090/S0894-0347-99-00315-X},
}

\bib{CTS-BrauerBook}{book}{
   author={Colliot-Th\'{e}l\`ene, Jean-Louis},
   author={Skorobogatov, Alexei N.},
   title={The {B}rauer-{G}rothendieck group},
   series={Ergebnisse der Mathematik und ihrer Grenzgebiete. 3. Folge. A
   Series of Modern Surveys in Mathematics [Results in Mathematics and
   Related Areas. 3rd Series. A Series of Modern Surveys in Mathematics]},
   volume={71},
   publisher={Springer, Cham},
   year = {2021},
   pages={xv+453},
   isbn={978-3-030-74247-8},
   isbn={978-3-030-74248-5},
   review={\MR{4304038}},
   doi={10.1007/978-3-030-74248-5},
}

\bib{Corn}{article}{
    AUTHOR = {Corn, Patrick},
     TITLE = {The {B}rauer-{M}anin obstruction on del {P}ezzo surfaces of degree 2},
   JOURNAL = {Proc. Lond. Math. Soc. (3)},
    VOLUME = {95},
      YEAR = {2007},
    NUMBER = {3},
     PAGES = {735--777},
      ISSN = {0024-6115},
       DOI = {10.1112/plms/pdm015},
       URL = {https://doi.org/10.1112/plms/pdm015},
}

\bib{CV-Capturing}{article}{
   author={Creutz, Brendan},
   author={Viray, Bianca},
   title={Degree and the Brauer--Manin obstruction},
   note={With an appendix by Alexei N. Skorobogatov},
   journal={Algebra Number Theory},
   volume={12},
   date={2018},
   number={10},
   pages={2445--2470},
   issn={1937-0652},
   review={\MR{3911136}},
   doi={10.2140/ant.2018.12.2445},
}
		
\bib{CVV-Capturing}{article}{
   author={Creutz, Brendan},
   author={Viray, Bianca},
   author={Voloch, Jos\'{e} Felipe},
   title={The $d$-primary Brauer--Manin obstruction for curves},
   journal={Res. Number Theory},
   volume={4},
   date={2018},
   number={2},
   pages={Paper No. 26, 16},
   issn={2522-0160},
   review={\MR{3807414}},
   doi={10.1007/s40993-018-0120-3},
}

\bib{HS}{article}{
   author={Harpaz, Yonatan},
   author={Skorobogatov, Alexei N.},
   title={Singular curves and the \'{e}tale Brauer--Manin obstruction for
   surfaces},
   language={English, with English and French summaries},
   journal={Ann. Sci. \'{E}c. Norm. Sup\'{e}r. (4)},
   volume={47},
   date={2014},
   number={4},
   pages={765--778},
   issn={0012-9593},
   review={\MR{3250063}},
   doi={10.24033/asens.2226},
}

\bib{KPS}{article}{
   author={Kebekus, Stefan},
   author={Pereira, Jorge Vit\'{o}rio},
   author={Smeets, Arne},
   title={Failure of the Brauer--Manin principle for a simply connected
   fourfold over a global function field, via orbifold Mordell},
   journal={Duke Math. J.},
   volume={171},
   date={2022},
   number={17},
   pages={3515--3591},
   issn={0012-7094},
   review={\MR{4510017}},
   doi={10.1215/00127094-2022-0045},
}

\bib{KT-dp2}{article}{
AUTHOR = {Kresch, Andrew},
AUTHOR = {Tschinkel, Yuri},
     TITLE = {On the arithmetic of del {P}ezzo surfaces of degree 2},
   JOURNAL = {Proc. London Math. Soc. (3)},
    VOLUME = {89},
      YEAR = {2004},
    NUMBER = {3},
     PAGES = {545--569},
      ISSN = {0024-6115},
       DOI = {10.1112/S002461150401490X},
       URL = {https://doi.org/10.1112/S002461150401490X},
}

\bib{Manin-ICM}{article}{
                author={Manin, Y. I.},
                title={Le groupe de Brauer-Grothendieck en g\'{e}om\'{e}trie diophantienne},
                conference={
                    title={Actes du Congr\`es International des Math\'{e}maticiens},
                    address={Nice},
                    date={1970},
                           },
                book={
                    publisher={Gauthier-Villars, Paris},
                     },
                date={1971},
                pages={401--411},
                review={\MR{0427322}},
            }

\bib{Nakahara-Capturing}{article}{
   author={Nakahara, Masahiro},
   title={Index of fibrations and Brauer classes that never obstruct the
   Hasse principle},
   journal={Adv. Math.},
   volume={348},
   date={2019},
   pages={512--522},
   issn={0001-8708},
   review={\MR{3928654}},
   doi={10.1016/j.aim.2019.03.012},
}

\bib{Poonen-Insufficiency}{article}{
   author={Poonen, Bjorn},
   title={Insufficiency of the Brauer--Manin obstruction applied to \'{e}tale
   covers},
   journal={Ann. of Math. (2)},
   volume={171},
   date={2010},
   number={3},
   pages={2157--2169},
   issn={0003-486X},
   review={\MR{2680407}},
   doi={10.4007/annals.2010.171.2157},
}

\bib{Poonen2017}{book}{
   author={Poonen, Bjorn},
   title={Rational points on varieties},
   series={Graduate Studies in Mathematics},
   volume={186},
   publisher={American Mathematical Society, Providence, RI},
   date={2017},
   pages={xv+337},
   isbn={978-1-4704-3773-2},
   review={\MR{3729254}},
   doi={10.1090/gsm/186},
}
  
  \bib{Rado1942}{article}{
   author={Rado, R.},
   title={A theorem on independence relations},
   journal={Quart. J. Math. Oxford Ser.},
   volume={13},
   date={1942},
   pages={83--89},
   issn={0033-5606},
   review={\MR{8250}},
   doi={10.1093/qmath/os-13.1.83},
}

\bib{SerreGaloisCohomology}{book}{
   author={Serre, Jean-Pierre},
   title={Galois cohomology},
   series={Springer Monographs in Mathematics},
   edition={English edition},
   note={Translated from the French by Patrick Ion and revised by the
   author},
   publisher={Springer-Verlag, Berlin},
   date={2002},
   pages={x+210},
   isbn={3-540-42192-0},
   review={\MR{1867431}},
}

\bib{Skorobogatov-etBr}{article}{
   author={Skorobogatov, Alexei N.},
   title={Beyond the Manin obstruction},
   journal={Invent. Math.},
   volume={135},
   date={1999},
   number={2},
   pages={399--424},
   issn={0020-9910},
   review={\MR{1666779}},
   doi={10.1007/s002220050291},
}

\bib{SZ-Kummer2torsion}{article}{
   author={Skorobogatov, Alexei N.},
   author={Zarhin, Yuri G.},
   title={Kummer varieties and their Brauer groups},
   journal={Pure Appl. Math. Q.},
   volume={13},
   date={2017},
   number={2},
   pages={337--368},
   issn={1558-8599},
   review={\MR{3858012}},
   doi={10.4310/PAMQ.2017.v13.n2.a5},
}

\bib{Viray-PCMI}{article}{
      title={Rational points on varieties and the Brauer--Manin obstruction}, 
      author={Viray, Bianca},
      year={2023},
      eprint={2303.17796v1},
}
		\end{biblist}
	\end{bibdiv}

\end{document}